\documentclass[smallextended]{svjour3m}       % onecolumn (second format)

\usepackage{tikz}
\usepackage{pgfplots}
\usepackage{verbatim}

\newcommand{\grad}{\mathop{\rm grad}\nolimits}
\renewcommand{\div}{\mathop{\rm div}\nolimits}
\newcommand{\const}{\mathop{\rm const}\nolimits}

\smartqed  % flush right qed marks, e.g. at end of proof
\usepackage{graphicx}

% Insert the name of "your journal" with
\journalname{arXiv.Org}
\begin{document}

\title{Domain decomposition schemes for evolutionary equations of first order with not self-adjoint operators}

\titlerunning{Domain decomposition schemes with not self-adjoint operators}        % if too long for running head

\author{Petr N. Vabishchevich}

\institute{P.N. Vabishchevich \at
              Keldysh Institute of Applied Mathematics, 
              4 Miusskaya Square, 125047 Moscow, Russia \\
              Tel.: +7-499-9781314\\
              Fax: +7-499-9720737\\
              \email{vabishchevich@gmail.com} 
}

\date{Submitted to arXiv.org January 12, 2011}

\maketitle

\begin{abstract}
Domain decomposition methods are essential in solving applied problems on parallel computer systems.  
For boundary value problems for evolutionary equations the implicit schemes are in common use
to solve problems at a new time level employing  iterative methods of domain decomposition.  
An alternative approach is based on constructing iteration-free methods based 
on special schemes of splitting into subdomains.  
Such regionally-additive schemes are constructed using the general theory of additive operator-difference schemes.  
There are employed the analogues of classical schemes of alternating direction method, 
locally one-dimensional schemes, factorization methods, vector and regularized additive schemes.   
The main results were obtained here for time-dependent problems with self-adjoint elliptic operators of second order.  

The paper discusses the Cauchy problem for the first order evolutionary equations with a 
nonnegative not self-adjoint operator in a finite-dimensional Hilbert space.  
Based on the partition of unit, we have constructed the operators of decomposition which preserve nonnegativity
for the individual operator terms of splitting.  
Unconditionally stable additive schemes of domain decomposition were constructed
using the regularization principle for operator-difference schemes.
Vector additive schemes were considered, too.  
The results of our work are illustrated by a model problem for the two-dimensional parabolic equation.  

\keywords{Time-dependent problems \and Domain decomposition method 
\and Additive schemes \and Operator-splitting difference schemes}
\PACS{02.60.Lj \and 02.70.Bf}
\subclass{65N06 \and 65M06}
\end{abstract}

\section{Introduction}
\label{s-1}

Domain decomposition methods are widely used for the numerical solution of boundary value problems 
for partial differential equations on parallel computers.  
Stationary problems  \cite{pre05281749,0931.65118,0857.65126,1069.65138} are the most extensively studied 
in the theory of domain decomposition methods.
Numerical algorithms with and without overlapping of subdomains are used here
in the synchronous (sequential) and asynchronous (parallel) organization of computations.  

Domain decomposition methods for unsteady problems are based on two approaches  \cite{1018.65103}.  
In the first approach for the approximate solution of time-dependent problems we use the
standard implicit approximations in time.    
After that domain decomposition methods are applied to solve the discrete problem at a new time level.  
For the optimal iterative methods of domain decomposition the number of iterations does not depend on the 
discretization steps in time and space \cite{Cai:1991:ASA,Cai:1994:MSM}.
The iteration-free domain decomposition algorithms are constructed for unsteady problems in the second approach.  
In some cases we can confine ourselves to only one iteration of the Schwarz alternating method 
for solving  boundary value problems for the parabolic equation of second order \cite{0825.65066,0766.65089}.  
Special schemes of splitting into subdomains  (regionally-additive schemes \cite{0719.65072,0723.65076})
are constructed, too.

Construction and convergence investigation of the regionally-additive schemes is
based on the general  results of the theory of splitting schemes  
\cite{Marchuk:1990:SAD,0971.65076,0209.47103}.
The most interesting for practice is the situation where the problem operator is decomposed into
a sum of three or more noncommutative not self-adjoint operators.  
In the case of such a multi-component splitting  the stable additive splitting schemes are constructed on the basis of the concept 
of summarized  approximation.  
Additively-averaged schemes of summarized  approximation are interesting for using on parallel computers.  
In the class of splitting schemes of full approximation \cite{0963.65091} we highlight the vector additive schemes 
based on the transition from the single original equation to a system of similar equations  
\cite{0712.65089,abrashin1998numerical,vabishchevich1996vector}.
Additive regularized operator-difference schemes are constructed in the most simple way for  multi-component splitting
\cite{samarskii1998regularized,samarskii1992regularized}, 
where stability is achieved via perturbations of operators of the difference scheme.  

Peculiarities of domain decomposition schemes result from  the selection of  splitting operators.  
To construct the operators of decomposition for boundary value problems for partial differential equations,
it is convenient to use the partition of unit for the computational domain  \cite{Dryja:1991:SMP,Laevsky,0863.65056,0719.65072,vab_138,0838.65086,0986.65510}.
In the  domain decomposition method with overlapping a separate subdomain is
associated with a function with values lying between zero and one.  
Domain decomposition methods for unsteady convection-diffusion problems are studied in works  \cite{0928.65102,vab_255,0888.65097}.
In the limiting case the width of subdomain overlapping is equal to the discretization step.  
In this case the regionally-additive schemes are interpreted as the decomposition 
without overlapping of subdomains but with appropriate boundary conditions of exchange.  
Domain decomposition methods for unsteady boundary value problems 
are summarized in the books  \cite{1018.65103,0963.65091}. 
More recent studies are presented in the work \cite{1156.65084}.
In this case we use different constructions for the splitting operators and for operators of 
the grid problem at a new time level.  

In this paper we construct domain decomposition schemes for the first order evolutionary 
equations with a general nonnegative operator in a finite Hilbert space.  
Decomposition operators are constructed here separately for the self-adjoint and skew-symmetric parts
using the partition of unit in the appropriate spaces.  Two classes of unconditionally stable regionally-additive 
regularized schemes are proposed. Vector additive operator-difference schemes of domain decomposition are considered.  
The paper is organized as follows.  
In section 2 there is formulated the Cauchy problem for the evolutionary equation of first order and 
the corresponding a priori estimate of stability is derived.  
Decomposition operators are constructed in Section 3.  
Problems with the self-adjoint operator and skew-symmetric one are considered separately.  
Unconditionally stable regularized additive  schemes of domain decomposition are constructed in Section 4, 
with the additive and multiplicative perturbation of the operator of transition to the new time level.
Vector splitting schemes are discussed in Section 5.  
In section 6 we consider a model boundary value problem for the two-dimensional parabolic equation 
along with the results of using different domain decomposition schemes.  
The main results are summarized in Section 7.  

\section{The Cauchy problem for the first order evolutionary equation}
\label{s-2}

Let $H$ be a finite-dimensional  real Hilbert space of grid functions with the scalar product and norm
 $(\cdot,\cdot)$ è $\|\cdot\|$, respectively.  
Let a constant (independent of time $t$)  grid operator  $A$ is nonnegative in $H$:
\begin{equation}\label{1}
  A \geq 0, 
  \quad \frac{d}{d t} A = A \frac{d}{d t} 
\end{equation}
and $E$  is the identity operator in  $H$.
We search the solution of the Cauchy   problem
\begin{equation}\label{2}
  \frac{d  u}{d  t} + A u = f(t),    
  \quad 0 < t \leq T ,
\end{equation}
\begin{equation}\label{3}
  u(0) = u^0.
\end{equation}

Problem (\ref{1})--(\ref{3})  results from a finite-difference approximation 
in space for the approximate solution of boundary value problems for partial differential equations.  
Similar systems of ordinary differential equations arise in using the finite-element method as well as 
in applying the finite-volume approach.  
Let us obtain the standard a priori estimate for problem (\ref{1})--(\ref{3}).  

Multiply equation (\ref{2}) by $u$ scalarly in  $H$ .  
In view of (\ref{1}) we obtain inequality  
 \begin{equation}\label{4}
  \frac{1}{2} \frac{d}{d t} \|u\|^2 \leq (f,u) .
\end{equation}
Taking into account  
\[
  (f, u) \leq \|f\| \|u\|,
\]
from (\ref{4})  we have  
\[
  \frac{d}{d t} \|u\| \leq \|f\| .
\]
In view of the Gronwall lemma we obtain the desired estimate  
\begin{equation}\label{5}
  \|u\| \leq \|u^0\| + \int_{0}^{t} \|f(\theta)\| d \theta ,
\end{equation}
which expresses the stability of the solution with respect to the initial data and right-hand side.  

The emphasis of our work is on constructing approximations in time for equation (\ref{2}).
Two-level schemes will be considered.  
Let $\tau$ be a step of the uniform grid in time and let   $y^n = y(t^n), \ t^n = n \tau$,
$n = 0,1, ..., N, \ N\tau = T$.
Equation (\ref{2}) is approximated by the two-level scheme with weights  
\begin{equation}\label{6}
  \frac{y^{n+1} - y^{n}}{\tau }
  + A(\sigma y^{n+1} + (1-\sigma) y^{n}) = \varphi^n,
  \quad n = 0,1, ..., N-1,
\end{equation}
where, for example,   $\varphi^n = f(\sigma t^{n+1} + (1-\sigma) t^{n})$.
It is supplemented by the initial condition  
\begin{equation}\label{7}
  y^0 = u^0 .
\end{equation}
Difference scheme (\ref{6}), (\ref{7}) has approximation error  $\mathcal{O} (\tau^2 + (\sigma - 0.5) \tau)$. 

Grid analog of (\ref{5})  is the estimate  at the time level 
\begin{equation}\label{8}
  \|y^{n+1}\| \leq \|y^{n}\| + \tau \|\varphi^n\|,
  \quad n = 0,1, ..., N-1 .
\end{equation}
Let us prove the following statement.  

\begin{theorem} 
\label{t-1} 
Difference scheme (\ref{1}), (\ref{6}), (\ref{7})
is unconditionally stable at $\sigma \geq 0.5$,
and estimate  (\ref{8})  holds  for the difference solution.  
\end{theorem}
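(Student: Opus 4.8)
The plan is to mimic the continuous energy argument from (\ref{4})--(\ref{5}) at the discrete level. First I would rewrite the weighted scheme (\ref{6}) in the canonical two-level form by introducing the midpoint-type combination; specifically, set $v = \sigma y^{n+1} + (1-\sigma) y^{n}$, so that $y^{n+1} - y^{n} = \tau(\varphi^n - Av)$ and, crucially, $v = \tfrac12(y^{n+1}+y^{n}) + (\sigma - \tfrac12)(y^{n+1}-y^{n})$. The natural thing to do is to take the scalar product of (\ref{6}) with $v$ in $H$: the term $(Av,v)$ is nonnegative by (\ref{1}), and the right-hand side gives $(\varphi^n, v) \le \|\varphi^n\|\,\|v\|$, exactly paralleling the derivation of (\ref{4}).

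Next I would handle the difference quotient term $\bigl(\tfrac{y^{n+1}-y^{n}}{\tau}, v\bigr)$. Substituting the decomposition of $v$ above, this equals
\[
  \frac{1}{2\tau}\bigl(\|y^{n+1}\|^2 - \|y^{n}\|^2\bigr) + \Bigl(\sigma - \frac12\Bigr)\frac{1}{\tau}\|y^{n+1}-y^{n}\|^2 .
\]
Here is where the hypothesis $\sigma \ge 0.5$ enters: the second term is then nonnegative and can be discarded, yielding
\[
  \frac{1}{2\tau}\bigl(\|y^{n+1}\|^2 - \|y^{n}\|^2\bigr) \le (\varphi^n, v) \le \|\varphi^n\|\,\|v\| .
\]

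The remaining obstacle is purely algebraic: converting the inequality into the clean form (\ref{8}) without a stray factor of $\|v\|$. I would bound $\|y^{n+1}\|^2 - \|y^{n}\|^2 = (\|y^{n+1}\|+\|y^{n}\|)(\|y^{n+1}\|-\|y^{n}\|)$, and separately estimate $\|v\| \le \sigma\|y^{n+1}\| + (1-\sigma)\|y^{n}\| \le \max(\|y^{n+1}\|,\|y^{n}\|)$ — or, to be safe when $\sigma>1$ is not assumed, simply $\|v\|\le \tfrac12(\|y^{n+1}\|+\|y^{n}\|) + (\sigma-\tfrac12)\|y^{n+1}-y^n\|$ and absorb the extra piece. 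Either way one divides through by the common factor and arrives at $\|y^{n+1}\| - \|y^{n}\| \le \tau\|\varphi^n\|$. A cleaner alternative, which I would actually prefer to present, is the standard trick: from $(Av,v)\ge 0$ one gets directly $\bigl(\tfrac{y^{n+1}-y^n}{\tau} + Av, v\bigr) \ge \bigl(\tfrac{y^{n+1}-y^n}{\tau},v\bigr)$, and the chain $\tfrac1{2\tau}(\|y^{n+1}\|^2-\|y^n\|^2) \le (\varphi^n,v)$ combined with the triangle-inequality bound on $\|v\|$ closes the estimate. The main subtlety to watch is ensuring the discarded term $(\sigma-\tfrac12)\tfrac1\tau\|y^{n+1}-y^n\|^2$ is genuinely of the right sign, which is exactly the stability threshold $\sigma\ge 0.5$; no spectral or commutator information about $A$ beyond $A\ge 0$ is needed, so the not self-adjointness of $A$ causes no difficulty here.
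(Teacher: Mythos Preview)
Your energy-method route is sound and genuinely different from the paper's argument. The paper does not test against $v$; instead it rewrites (\ref{6}) in the resolved form $y^{n+1}=Sy^{n}+\tau(E+\sigma\tau A)^{-1}\varphi^n$ with the transition operator $S=(E+\sigma\tau A)^{-1}(E-(1-\sigma)\tau A)$ and proves $\|S\|\le 1$ by checking the equivalent inequality $SS^{*}\le E$, which after clearing the resolvents reduces to $\tau(A+A^{*})+(2\sigma-1)\tau^{2}AA^{*}\ge 0$. This is exactly where $A\ge 0$ and $\sigma\ge 1/2$ enter. The payoff of the paper's approach is structural: the bound $\|S\|\le 1$ is the template it reuses in Theorems~\ref{t-2} and~\ref{t-3} for the regularized transition operators $\widetilde{S}_{\alpha}$, so the proof here is doing double duty. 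Your approach is more elementary and avoids operator-norm algebra, but does not set up that later machinery.

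One caution on your closing step: the phrases ``divide through by the common factor'' and ``absorb the extra piece'' hide a real difficulty. For $\sigma=\tfrac12$ you can cancel $\|y^{n+1}\|+\|y^{n}\|$ cleanly, and for $\sigma=1$ you can cancel $\|y^{n+1}\|$; but for intermediate $\sigma$ the bound $\|v\|\le\max(\|y^{n+1}\|,\|y^{n}\|)$, after dropping the $(\sigma-\tfrac12)$ term, yields only $\|y^{n+1}\|-\|y^{n}\|\le \frac{2\|y^{n+1}\|}{\|y^{n+1}\|+\|y^{n}\|}\,\tau\|\varphi^n\|$, which overshoots (\ref{8}). The fix is to \emph{keep} the $(\sigma-\tfrac12)\tau^{-1}\|y^{n+1}-y^{n}\|^{2}$ term, use the bound $\|v\|\le\tfrac12(\|y^{n+1}\|+\|y^{n}\|)+(\sigma-\tfrac12)\|y^{n+1}-y^{n}\|$ that you already wrote down, and argue by contradiction: if $\|y^{n+1}\|-\|y^{n}\|>\tau\|\varphi^n\|$ then also $\|y^{n+1}-y^{n}\|>\tau\|\varphi^n\|$, and your energy inequality rearranges to
\[
\frac{\|y^{n+1}\|+\|y^{n}\|}{2}\Bigl(\frac{\|y^{n+1}\|-\|y^{n}\|}{\tau}-\|\varphi^n\|\Bigr)
+(\sigma-\tfrac12)\,\|y^{n+1}-y^{n}\|\Bigl(\frac{\|y^{n+1}-y^{n}\|}{\tau}-\|\varphi^n\|\Bigr)\le 0,
\]
with both summands strictly positive. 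Record this; otherwise the derivation of the sharp estimate (\ref{8}) is incomplete.
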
 

\begin{proof}
We write (\ref{6})  in the form  
\begin{equation}\label{9}
  y^{n+1} = S y^{n} + \tau (E + \sigma \tau A)^{-1} \varphi^n ,
\end{equation}
where  
\begin{equation}\label{10}
  S = (E + \sigma \tau A)^{-1} (E - (1 - \sigma) \tau A)
\end{equation}
is the operator of transition to the new time level.  
From (\ref{9}) we have  
\begin{equation}\label{11}
  \|y^{n+1}\| = \|S \| \, \|y^{n}\| + 
  \tau \|(E + \sigma \tau A)^{-1} \varphi^n\| .
\end{equation}

For the last term in the right-hand side of (\ref{11}) in the class of operators (\ref{1}), 
under natural conditions $\sigma \geq 0$  we have  
\[
  \|(E + \sigma \tau A)^{-1} \varphi^n\| \leq \|\varphi^n\|.
\]
We show that if $\sigma \geq 0.5$ then for the nonnegative operators $A$ the following estimate holds  
\begin{equation}\label{12}
  \|S\| \leq 1.
\end{equation}
In the Hilbert real space $H$   inequality (\ref{12})
 is equivalent  \cite{1152.15001} to fulfilment  of the operator inequality  
\[
  S S^* \leq E.
\]
In view of (\ref{10}) this inequality takes the form  
\[
  (E + \sigma \tau A)^{-1} (E - (1 - \sigma) \tau A)
  (E - (1 - \sigma) \tau A^*)  (E + \sigma \tau A^*)^{-1} \leq E. 
\]
Multiplying this inequality on the left by $(E + \sigma \tau A)^{-1}$  and on the right by $(E + \sigma \tau A^*)^{-1}$, 
we obtain  
\[
  (E - (1 - \sigma) \tau A) (E - (1 - \sigma) \tau A^*) \leq
  (E + \sigma \tau A)(E + \sigma \tau A^*) .
\]
It follows that 
\[
  \tau (A + A^*) + (\sigma^2 - (1-\sigma )^2) \tau^2 A A^* \geq 0.
\]
This inequality holds for the nonnegative operators $A$  with $\sigma \geq 0.5$.
In view of (\ref{12})   we have from (\ref{11})  required estimate  (\ref{8}).
\end{proof}

\section{Operators of decomposition}
\label{s-3}

To better understand the formal structure of the domain decomposition operators, we give a typical example.  
We consider a model unsteady convection-diffusion problem
with a constant (independent of time, but depending on the points
of a computational domain) coefficients of diffusion 
and convection.  
Convective transport is written in the so-called (see, for example, \cite{SamVabConvection})  symmetric form.  
Let in a bounded domain $\Omega$  an unknown function $u(\mathbf{x},t)$
satisfies the following equation
\[
   \frac{\partial u}{\partial t} + \frac{1}{2}
   \sum_{\alpha =1}^{m} \left ( 
   v_\alpha ({\bf x}) \frac{\partial u}{\partial x_\alpha}\ +
   \frac{\partial }{\partial x_\alpha} (v_\alpha ({\bf x}) u) \right ) 
\]
\begin{equation}\label{13}
   - \sum_{\alpha =1}^{m}
   \frac{\partial }{\partial x_\alpha} 
   \left ( k({\bf x})  \frac{\partial u}{\partial x_\alpha} \right ) = f({\bf x},t),
   \quad {\bf x}\in \Omega,
   \quad 0 < t < T,
\end{equation}
where  $k(\mathbf{x}) \geq \kappa > 0, \  {\bf x}\in \Omega$.
Supplement equation (\ref{13}) with the homogeneous Dirichlet boundary conditions  
\begin{equation}\label{14}
   u({\bf x},t) = 0,
   \quad {\bf x}\in \partial \Omega,
   \quad 0 < t < T.
\end{equation}
In addition, we define the initial condition  
\begin{equation}\label{15}
   u({\bf x},0) = u^0({\bf x}),
   \quad {\bf x}\in \Omega.
\end{equation}

Let us consider a set of functions $u({\bf x},t)$ 
satisfying boundary conditions (\ref{14}). 
We write the unsteady convection-diffusion problem
in the form of differential-operator equation
\begin{equation}\label{16}
  \frac {du}{dt} + {\cal A} u = f(t),
  \quad 0 < t < T.
\quad t > 0
\end{equation}
We consider the Cauchy problem for evolutionary equation  (\ref{16}):
\begin{equation}\label{17}
  u(0) = u^0 .
\end{equation}
Operators of diffusive and convective transport are treated separately, so that in  (\ref{16})
\begin{equation}\label{18}
  {\cal A} = {\cal C} + {\cal D} .
\end{equation}
For the diffusion operator we set  
\[
  {\cal D} u = 
   - \sum_{\alpha =1}^{m}
   \frac{\partial }{\partial x_\alpha} 
   \left ( k({\bf x})  \frac{\partial u}{\partial x_\alpha} \right ) .
\]
On  set of functions (\ref{14}) in ${\cal H} =  {\cal L}_2 (\Omega)$
diffusive transport operator ${\cal D}$  is self-adjoint and positive definite:  
\begin{equation}\label{19}
  {\cal D} = {\cal D}^* \ge \kappa \delta {\cal E},
  \quad \delta  = \delta(\Omega) > 0,
\end{equation}
where ${\cal E}$ is the identity operator in  ${\cal H}$.

Convective transport operator ${\cal C}$ is defined by the expression  
\[
  {\cal C} u = \frac{1}{2}
   \sum_{\alpha =1}^{m} \left ( 
   v_\alpha ({\bf x}) \frac{\partial u}{\partial x_\alpha}\ +
   \frac{\partial }{\partial x_\alpha} (v_\alpha ({\bf x}) u) \right ) . 
\]
For any $v_\alpha ({\bf x})$ the operator ${\cal C}$ is skew-symmetric  in ${\cal H}$:
\begin{equation}\label{20}
  {\cal C} = - {\cal C}^*.
\end{equation}
Taking into account representation  (\ref{18}), it follows from (\ref{19}), (\ref{20}) that ${\cal A} > 0$ in ${\cal H}$.

Domain decomposition schemes will be constructed via the partition of unit for the computational domain  $\Omega$. 
Let domain $\Omega$  consists of $p$  separate subdomains  
\[
  \Omega =\Omega _1\cup \Omega _2\cup ...\cup \Omega _p.
\]
Individual subdomains can overlap.  
With separate subdomain $\Omega_{\alpha}, \ \alpha = 1,2,...,p$
we associate function $\eta_{\alpha}(\mathbf{x}), \ \alpha = 1,2,...,p$ such that  
\begin{equation}\label{21}
  \eta_{\alpha}(\mathbf{x}) = \left \{
   \begin{array}{cc}
     > 0, &  \mathbf{x} \in \Omega_{\alpha},\\
     0, &  \mathbf{x} \notin  \Omega_{\alpha}, \\
   \end{array}
  \right .
  \quad \alpha = 1,2,...,p ,  
\end{equation}
where  
\begin{equation}\label{22}
  \sum_{\alpha =1}^{p} \eta_{\alpha}(\mathbf{x}) = 1,
  \quad \mathbf{x} \in \Omega .
\end{equation}

In view of (\ref{21}), (\ref{22})  we obtain from  (\ref{18}) the following representation  
\begin{equation}\label{23}
  {\cal A} = \sum_{\alpha =1}^{p} {\cal A}_{\alpha},
  \quad {\cal A}_{\alpha} = {\cal C}_{\alpha} + {\cal D}_{\alpha} ,
  \quad \alpha = 1,2,...,p , 
\end{equation}
where  
\[
  {\cal D}_{\alpha} u = 
   - \sum_{\alpha =1}^{m}
   \frac{\partial }{\partial x_\alpha} 
   \left ( k({\bf x}) \eta_{\alpha}(\mathbf{x}) \frac{\partial u}{\partial x_\alpha} \right ) ,
\]
\[
  {\cal C}_{\alpha} u = \frac{1}{2}
   \sum_{\alpha =1}^{m} \left ( 
   v_\alpha ({\bf x}) \eta_{\alpha}(\mathbf{x}) \frac{\partial u}{\partial x_\alpha}\ +
   \frac{\partial }{\partial x_\alpha} (v_\alpha ({\bf x}) \eta_{\alpha}(\mathbf{x})u) \right ) . 
\]
Similarly (\ref{19}), (\ref{20}),  we have  
\begin{equation}\label{24}
  {\cal D}_{\alpha} = {\cal D}^*_{\alpha} \geq 0,
  \quad {\cal C}_{\alpha} = - {\cal C}^*_{\alpha},
  \quad \alpha = 1,2,...,p . 
\end{equation}
In view of (\ref{24}) in splitting  (\ref{23}) the following property holds  
\begin{equation}\label{25}
  {\cal A}_{\alpha}  \geq 0,
  \quad \alpha = 1,2,...,p ,
\end{equation}
and the self-adjoint part of operator ${\cal A}$  is splitted into the sum of nonnegative self-adjoint operators, 
whereas  the skew-symmetric one -- into the sum of skew-symmetric operators. 

It is convenient to represent diffusive transport operator ${\cal D}$ as follows
\begin{equation}\label{26}
  {\cal D} = {\cal G}^* {\cal G},
  \quad {\cal G} = k^{1/2} \grad,
  \quad {\cal G}^* = -  \div k^{1/2} ,
\end{equation}
with  ${\cal G}: {\cal H} \rightarrow \widetilde{{\cal H}}$,
where  $\widetilde{{\cal H}} = ({\cal L}_2(\Omega))^p$ is the corresponding Hilbert space of vector functions.  
From this structure, for ${\cal D}_{\alpha}, \ \alpha = 1,2, ...,p$  we obtain  
\begin{equation}\label{27}
  {\cal D}_{\alpha} = {\cal G}^* \eta_{\alpha} {\cal G},
  \quad \alpha = 1,2,...,p .
\end{equation}
Similarly, ${\cal C}_{\alpha}, \ \alpha = 1,2, ...,p$ can be represented as
\begin{equation}\label{28}
  {\cal C}_{\alpha} =  
  \frac{1}{2} ( \eta_{\alpha} {\cal C} + {\cal C}  \eta_{\alpha}),
  \quad \alpha = 1,2,...,p .
\end{equation}
Representations (\ref{27}), (\ref{28}) for operators of diffusive and convective transport
demonstrate us clearly the structure of operators in individual subdomains in the splitting based 
on (\ref{21}), (\ref{22})  and  allow to verify fulfilment of  (\ref{24}).
A similar consideration can be given for the operator of problem (\ref{2}), (\ref{3}).

Let us divide the operator $A$ into the self-adjoint and skew-symmetric parts:  
\begin{equation}\label{29}
  A = C + D,
  \quad C = \frac{1}{2} (A-A^*),
  \quad D = \frac{1}{2} (A+A^*).
\end{equation}
For the nonnegative operator $D$  the following representation  holds
\begin{equation}\label{30}
  D = G^* G ,
\end{equation}
where  $G: H \rightarrow \widetilde{H}$.
For the partition of unit of the computational domain we consider the corresponding additive representation of 
unit operators $E$  and $\widetilde{E}$
in spaces $H$ and $\widetilde{H}$, respectively.  
Let  
\begin{equation}\label{31}
  \sum_{\alpha =1}^{p} \chi_{\alpha} = E,
  \quad \chi_{\alpha} \geq 0,
  \quad \alpha = 1,2,...,p ,
\end{equation}
\begin{equation}\label{32}
  \sum_{\alpha =1}^{p} \widetilde{\chi}_{\alpha} = \widetilde{E},
  \quad \widetilde{\chi}_{\alpha} \geq 0,
  \quad \alpha = 1,2,...,p .
\end{equation}

By analogy with (\ref{23})--(\ref{25}), we use the splitting  
\begin{equation}\label{33}
  A = \sum_{\alpha =1}^{p} A_{\alpha},
  \quad A_{\alpha} \geq 0 ,
  \quad \alpha = 1,2,...,p , 
\end{equation}
where  
\begin{equation}\label{34}
  A_{\alpha} = C_{\alpha} + D_{\alpha} ,
  \quad D_{\alpha} = D^*_{\alpha} \geq 0 ,
  \quad C_{\alpha} = - C^*_{\alpha} ,
  \quad \alpha = 1,2,...,p . 
\end{equation}
On the basis of (\ref{32})  for the terms of the self-adjoint part of the operator  $A$ we set  
\begin{equation}\label{35}
  D_{\alpha} = G^* \widetilde{\chi}_{\alpha} G,
  \quad \alpha = 1,2,...,p .
\end{equation}
Decomposition of the skew-symmetric part is based on (\ref{31}):
\begin{equation}\label{36}
  C_{\alpha} = 
  \frac{1}{2} (  \chi_{\alpha} C + C  \chi_{\alpha}),
  \quad \alpha = 1,2,...,p .
\end{equation}
Such an additive representation is a discrete analog of (\ref{27}), (\ref{28}) 
and is interpreted as the corresponding version of the domain decomposition.  

\section{Regularized domain decomposition schemes}
\label{s-4}

For the approximate solution of the Cauchy problem for equation (\ref{2}), (\ref{3})  under condition   (\ref{33})
we apply different splitting schemes.  
Transition to the new time level is based on solving $p$ separate subproblems 
with individual operators $A_{\alpha}, \ \alpha = 1,2, ...,p$.
Taking into account the structure of the operators (see (\ref{34})--(\ref{36})) we can say
that these splitting schemes are regionally-additive and iteration-free.  

Currently, the principle of regularization of difference schemes is considered as the basic methodological principle
for improving difference schemes   \cite{0971.65076}.      
The construction of unconditionally stable additive difference schemes  \cite{0963.65091}
via the principle of regularization  will be implemented in the following way.  

\begin{enumerate}
  \item  For the initial problem there is constructed some simple difference scheme (the producing difference scheme). 
  This scheme does not satisfy the necessary properties.  For example, in constructing additive schemes 
  the producing scheme is not splitting one, or it is conditionally stable or even absolutely unstable.  

  \item The difference scheme is written in the form for which the stability conditions are known.  

  \item  Quality of the scheme (its stability) is improved via perturbations of operators of the difference scheme 
  with preserving possibility of its computational implementation as an additive scheme.  
\end{enumerate}

Concerning to problem (\ref{2}), (\ref{3})  it is natural to choose as the producing scheme the following simple explicit scheme
\begin{equation}\label{37}
  \frac{y^{n+1} - y^{n}}{\tau }
  +  A y^{n} = \varphi^n,
  \quad n = 0,1, ..., N-1,
\end{equation}
which is supplemented by initial conditions (\ref{7}).
Stability of scheme  (\ref{37}) is provided (see the proof of Theorem~\ref{t-1}) by fulfillment of inequality  
\begin{equation}\label{38}
  A + A^* - \tau A A^* \geq 0.
\end{equation}
Inequality (\ref{38}) with $D > 0$ imposes appropriate restrictions on the time step, 
i.e.  scheme (\ref{29}), (\ref{37})  is conditionally stable.  
Note also that if $D = 0$ than  scheme (\ref{29}), (\ref{37}) is absolutely unstable. 
Taking into account splitting (\ref{33}), 
we refer this scheme to the class of additive schemes.  

To construct additive schemes, we can take more general scheme (\ref{6}), (\ref{7}) as a producing one.
It is unconditionally stable at  $\sigma \geq 0.5$. 
In this case the perturbation of scheme operators is oriented  only to receive the additive schemes 
preserving the property of unconditional stability.  

Regularization of a difference scheme in order to improve the stability restriction  
(construction of a splitting scheme) can be performed via some perturbation of the operator $A$. 
The second possibility is related to perturbation of the operator at the difference derivative in time 
(for our scheme (\ ref {37}) it is operator $E$).
In constructing additive schemes, it is convenient to operate with the transition operator   $S$,  
rewriting producing scheme  (\ref{37})  as  follows
\begin{equation}\label{39}
  y^{n+1} = S y^{n} + \tau \varphi^n ,
  \quad n = 0,1, ..., N-1 .
\end{equation}
For  (\ref{37}) we have  
\begin{equation}\label{40}
  S = E - \tau A .
\end{equation}
The regularized scheme is based on perturbation of the operator $S$ and has the following form 
\begin{equation}\label{41}
  y^{n+1} = \widetilde{S} y^{n} + \tau \varphi^n ,
  \quad n = 0,1, ..., N-1 .
\end{equation}
Let us consider general restrictions for $\widetilde{S}$.

To preserve the first order approximation, which has generating scheme (\ref{39}), (\ref{40}), 
we subordinate  the selection of $\widetilde{S}$ to the condition  
\begin{equation}\label{42}
  \widetilde{S} = E - \tau A + \mathcal{O} (\tau^2) .
\end{equation}
Stability of scheme  (\ref{41}) in the sense that  estimate  (\ref{8}) holds, is provided by the inequality  
\begin{equation}\label{43}
  \| \widetilde{S} \| \leq 1 .
\end{equation}
In addition, the regularized scheme must be additive, i.e. the transition to the new time 
level is implemented via solving the individual subproblems for operators $A_{\alpha}, \ \alpha = 1,2,...,p$  
in decomposition  (\ref{33}).

The first class of regularized splitting schemes is based on the following additive representation 
for the transition operator of the producing scheme  
\[
  S = \frac{1}{p} \sum_{\alpha =1}^{p} S_{\alpha},
  \quad S_{\alpha} = E - p \tau A_{\alpha},
  \quad \alpha = 1,2,...,p .
\]
The similar additive representation we also use for the transition operator of the regularized scheme  
\begin{equation}\label{44}
  \widetilde{S} = \frac{1}{p} \sum_{\alpha =1}^{p} \widetilde{S}_{\alpha},
  \quad \alpha = 1,2,...,p .  
\end{equation}
Individual terms $\widetilde{S}_{\alpha}, \ \alpha = 1,2,...,p$  are constructed via perturbations of operators 
$A_{\alpha}, \ \alpha = 1,2,...,p$.
By analogy with(\ref{10}) we set  
\begin{equation}\label{45}
  \widetilde{S}_{\alpha} = 
  (E + \sigma p \tau A_{\alpha})^{-1} (E - (1 - \sigma) p \tau A_{\alpha}),
  \quad \alpha = 1,2,...,p .  
\end{equation}
If  $\sigma \geq 0.5$  (see proof of Theorem~\ref{t-1}) we have  
\[
  \|\widetilde{S}_{\alpha} \| \leq 1,
  \quad \alpha = 1,2,...,p . 
\]
In view of (\ref{44})  it provides fulfilment of  stability conditions (\ref{43}). 

Using the representation
\[
  \widetilde{S}_{\alpha} = 
  E - p \tau (E + \sigma p \tau A_{\alpha})^{-1} A_{\alpha},
  \quad \alpha = 1,2,...,p   
\]
we can rewrite regularized additive scheme (\ref{41}), (\ref{44}), (\ref{45}) as follows  
\begin{equation}\label{46}
  \frac{y^{n+1} - y^{n}}{\tau }
  +  \sum_{\alpha =1}^{p} (E + \sigma p \tau A_{\alpha})^{-1} 
  A_{\alpha} y^{n} = \varphi^n,
  \quad n = 0,1, ..., N-1 .
\end{equation}
The comparison with producing scheme  (\ref{33}), (\ref{37})  shows that 
the regularization is provided by perturbation of the operator $A$.
Our consideration results in the following statement.  

\begin{theorem} 
\label{t-2} 
Additive difference scheme(\ref{7}), (\ref{41}), (\ref{44}), (\ref{45})
 is unconditionally stable at $\sigma \geq 0.5$,
and for the numerical solution estimate of stability  (\ref{8}) with respect to the initial data and right-hand side  holds.  
\end{theorem}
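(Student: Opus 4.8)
The plan is to reduce the stability of the additive scheme \eqref{41}, \eqref{44}, \eqref{45} to the single-operator estimate already established in Theorem~\ref{t-1}. First I would observe that, since each $A_\alpha$ is nonnegative in $H$ by \eqref{33}, the operator $\widetilde S_\alpha$ defined in \eqref{45} is precisely the transition operator \eqref{10} for the weighted scheme applied to the operator $p A_\alpha$ with time step $\tau$ (equivalently, to $A_\alpha$ with time step $p\tau$). Hence the argument in the proof of Theorem~\ref{t-1}, applied verbatim to $A_\alpha$ in place of $A$ and $p\tau$ in place of $\tau$, yields $\|\widetilde S_\alpha\| \le 1$ for every $\alpha = 1,2,\dots,p$ whenever $\sigma \ge 0.5$; the key inequality there, $p\tau(A_\alpha + A_\alpha^*) + (\sigma^2 - (1-\sigma)^2)(p\tau)^2 A_\alpha A_\alpha^* \ge 0$, holds for nonnegative $A_\alpha$ and $\sigma \ge 0.5$.

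Next I would estimate the full transition operator $\widetilde S = \tfrac1p \sum_{\alpha=1}^p \widetilde S_\alpha$ from \eqref{44}. By the triangle inequality for the operator norm,
\[
  \|\widetilde S\| \le \frac{1}{p} \sum_{\alpha=1}^{p} \|\widetilde S_\alpha\| \le \frac{1}{p} \sum_{\alpha=1}^{p} 1 = 1 ,
\]
so that \eqref{43} holds. Then I would write the scheme in the form \eqref{41}, take norms,
\[
  \|y^{n+1}\| \le \|\widetilde S\|\,\|y^{n}\| + \tau \|\varphi^n\| \le \|y^{n}\| + \tau \|\varphi^n\| ,
\]
which is exactly the recursive estimate \eqref{8}. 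One should also record that \eqref{42} holds, so the regularized scheme retains first-order approximation: expanding \eqref{45} gives $\widetilde S_\alpha = E - p\tau A_\alpha + \mathcal{O}(\tau^2)$, and averaging over $\alpha$ together with $\sum_\alpha A_\alpha = A$ from \eqref{33} gives $\widetilde S = E - \tau A + \mathcal{O}(\tau^2)$; this is a remark rather than part of the stability proof, but it justifies calling \eqref{46} a consistent regularization of the producing scheme.

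The only genuinely delicate point is the passage from the bounds on the individual $\widetilde S_\alpha$ to the bound on $\widetilde S$: here it is essential that the regularized scheme uses the \emph{convex combination} $\tfrac1p\sum_\alpha \widetilde S_\alpha$ rather than, say, a product of the $\widetilde S_\alpha$, because the $A_\alpha$ need not commute and a product of norm-nonexpansive operators is of course still nonexpansive but a product of the \emph{operators} $E - p\tau(E+\sigma p\tau A_\alpha)^{-1}A_\alpha$ would not obviously be — whereas for the average the triangle inequality settles the matter with no commutativity assumption whatsoever. So the proof is essentially: (i) each $\widetilde S_\alpha$ is a Theorem~\ref{t-1}-type transition operator, hence nonexpansive for $\sigma\ge 0.5$; (ii) average of nonexpansive operators is nonexpansive; (iii) feed this into \eqref{41} to get \eqref{8}. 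I do not anticipate any obstacle beyond bookkeeping, since all the analytic work was done in Theorem~\ref{t-1}.
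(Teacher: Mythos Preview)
Your proposal is correct and follows essentially the same route as the paper: the argument there (given in the paragraphs immediately preceding the theorem statement) also invokes the proof of Theorem~\ref{t-1} to obtain $\|\widetilde S_\alpha\|\le 1$ for each $\alpha$ when $\sigma\ge 0.5$, then uses the convex combination \eqref{44} to conclude $\|\widetilde S\|\le 1$ and hence \eqref{8} from \eqref{41}. Your write-up is in fact more explicit than the paper's, which simply points to Theorem~\ref{t-1} and to \eqref{44} without spelling out the triangle-inequality step; one small quibble is that your closing parenthetical is muddled, since $E - p\tau(E+\sigma p\tau A_\alpha)^{-1}A_\alpha$ \emph{is} $\widetilde S_\alpha$ and hence nonexpansive, so a product of these would also be nonexpansive---indeed this is exactly how Theorem~\ref{t-3} is proved for the multiplicative variant \eqref{48}, \eqref{49}.
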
 

Numerical implementation of scheme (\ref{7}), (\ref{46})  can be conducted as follows. 
Assume 
\[
  y^{n+1} = \frac{1}{p} \sum_{\alpha =1}^{p} y_{\alpha}^{n+1},
  \quad \varphi^{n} = \sum_{\alpha =1}^{p} \varphi_{\alpha}^{n} .
\]
In this case we obtain  
\begin{equation}\label{47}
  \frac{y_{\alpha}^{n+1} - y^{n}}{p \tau }
  +  (E + \sigma p \tau A_{\alpha})^{-1} 
  A_{\alpha} y^{n} = \varphi_{\alpha}^n,
  \quad \alpha = 1,2,...,p .
\end{equation}
for  the individual components of the approximate solution at the new time level
$y_{\alpha}^{n+1}, \ \alpha = 1,2,...,p$.
Scheme (\ref{47})  can be rewritten as  follows
\[
  \frac{y_{\alpha}^{n+1} - y^{n}}{p \tau }
  +  A_{\alpha} y^{n} (\sigma  y_{\alpha}^{n+1} 
  + (1 - \sigma ) y^{n})= 
  (E + \sigma p \tau A_{\alpha}) \varphi_{\alpha}^n .
\]
In this form we can interpret scheme  (\ref{47})
as a variant of the additively-averaged scheme of component-wise splitting  \cite{0963.65091}.

The second class of regularized splitting schemes is based on using not additive (see(\ref{44})) but multiplicative 
representation of the transition operator:  
\begin{equation}\label{48}
  \widetilde{S} = \prod_{\alpha =1}^{p} \widetilde{S}_{\alpha},
  \quad \alpha = 1,2,...,p .  
\end{equation}
Taking into account (\ref{42}), we have 
\[
  S = \prod_{\alpha =1}^{p} S_{\alpha} + \mathcal{O} (\tau^2),
  \quad S_{\alpha} = E - \tau A_{\alpha},
  \quad \alpha = 1,2,...,p .
\]
Similarly (\ref{45}),  we set  
\begin{equation}\label{49}
  \widetilde{S}_{\alpha} = 
  (E + \sigma \tau A_{\alpha})^{-1} (E - (1 - \sigma) \tau A_{\alpha}),
  \quad \alpha = 1,2,...,p .  
\end{equation}
Under the standard restrictions $\sigma \geq 0.5$ regularized scheme(\ref{41}), (\ref{48}), (\ref{49}) is stable.  

\begin{theorem} 
\label{t-3} 
Additive difference scheme  (\ref{7}), (\ref{41}), (\ref{48}), (\ref{49}) is unconditionally stable 
at  $\sigma \geq 0.5$, and  estimate (\ref{8}) of stability with respect to
the initial data and right-hand side is valid for the difference solution.
\end{theorem}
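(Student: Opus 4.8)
The plan is to reduce the stability of this multiplicative scheme to the single-operator estimate already established in the proof of Theorem~\ref{t-1}. First I would rewrite scheme (\ref{41}), (\ref{48}), (\ref{49}) in the canonical two-level form $y^{n+1} = \widetilde{S} y^{n} + \tau \varphi^n$ and take norms, obtaining $\|y^{n+1}\| \le \|\widetilde{S}\|\,\|y^{n}\| + \tau \|\varphi^n\|$, exactly as in (\ref{11}). Hence estimate (\ref{8}) follows the moment we verify $\|\widetilde{S}\| \le 1$, and the whole argument comes down to bounding the transition operator.

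The key observation is that $\widetilde{S}$ is a product of the factors $\widetilde{S}_{\alpha}$, so by submultiplicativity of the operator norm $\|\widetilde{S}\| \le \prod_{\alpha=1}^{p} \|\widetilde{S}_{\alpha}\|$, and it suffices to bound each factor by one. But each $\widetilde{S}_{\alpha} = (E + \sigma \tau A_{\alpha})^{-1}(E - (1-\sigma)\tau A_{\alpha})$ has precisely the form (\ref{10}) of the transition operator analysed in Theorem~\ref{t-1}, with $A$ replaced by $A_{\alpha}$. Since $A_{\alpha} \ge 0$ by the construction (\ref{33}), (\ref{34}), I would repeat verbatim the computation of that proof: pass to the equivalent operator inequality $\widetilde{S}_{\alpha}\widetilde{S}_{\alpha}^* \le E$, multiply on the left and right by the resolvents to clear them, and reduce to $\tau(A_{\alpha} + A_{\alpha}^*) + (\sigma^2 - (1-\sigma)^2)\tau^2 A_{\alpha} A_{\alpha}^* \ge 0$, which holds for nonnegative $A_{\alpha}$ as soon as $\sigma \ge 0.5$. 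This yields $\|\widetilde{S}_{\alpha}\| \le 1$ for every $\alpha$, and multiplying these bounds gives $\|\widetilde{S}\| \le 1$, i.e. (\ref{43}), whence (\ref{8}).

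There is no genuine obstacle here; the only point requiring care is the recognition that submultiplicativity is exactly what is needed and exactly what is available — unlike the self-adjoint commuting setting, the operators $A_{\alpha}$ need not commute, so $\widetilde{S}$ is in general not symmetrizable and no spectral argument for $\widetilde{S}$ itself is possible, but submultiplicativity of $\|\cdot\|$ does not require commutativity. I would also record, as a side remark, that the approximation condition (\ref{42}) is preserved: from $\widetilde{S}_{\alpha} = E - \tau (E + \sigma \tau A_{\alpha})^{-1} A_{\alpha} = E - \tau A_{\alpha} + \mathcal{O}(\tau^2)$ one gets $\prod_{\alpha=1}^{p}\widetilde{S}_{\alpha} = E - \tau \sum_{\alpha=1}^{p} A_{\alpha} + \mathcal{O}(\tau^2) = E - \tau A + \mathcal{O}(\tau^2)$, so the regularized multiplicative scheme keeps first-order accuracy while becoming unconditionally stable.
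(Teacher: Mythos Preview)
Your proposal is correct and follows exactly the paper's approach: the paper does not give a separate formal proof for Theorem~\ref{t-3} but relies on the preceding discussion, observing that each factor $\widetilde{S}_{\alpha}$ satisfies $\|\widetilde{S}_{\alpha}\|\le 1$ by the argument of Theorem~\ref{t-1} (since $A_{\alpha}\ge 0$), and then the multiplicative structure (\ref{48}) gives $\|\widetilde{S}\|\le 1$ and hence (\ref{8}). Your explicit invocation of submultiplicativity and the side remark on approximation (\ref{42}) simply make precise what the paper leaves implicit.
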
 

We present now some possible implementation of the constructed regularized scheme.  
Let us introduce auxiliary quantities $y^{n+\alpha/p}, \ \alpha = 1,2,...,p$  
and taking into account (\ref{41}), (\ref{48}), we define them from the equations  
\[
  y^{n+\alpha/p} = \widetilde{S}_{\alpha} y^{n+(\alpha-1)/p} ,
  \quad \alpha = 1,2,...,p-1, 
\] 
\begin{equation}\label{50}
  y^{n+1} = \widetilde{S}_{p} y^{n+(p-1)/p} + 
  \tau \varphi^n  .
\end{equation}
Similar to  (\ref{47})  we obtain  from (\ref{50})
\begin{equation}\label{51}
  \frac{y^{n+\alpha/p} - y^{n+(\alpha-1)/p}}{\tau }
  +  (E + \sigma \tau A_{\alpha})^{-1} 
  A_{\alpha} y^{n+(\alpha-1)/p} = \varphi_{\alpha}^n,
\end{equation}
where  
\[
  \varphi_{\alpha}^n = \left \{
  \begin{array}{ll}
   0, &  \alpha = 1,2,...,p-1, \\
   \varphi^n ,  & \alpha = p .   \\
  \end{array}
  \right .
\]
Rewrite  scheme(\ref{51}) as follows 
\begin{equation}\label{52}
  \frac{y^{n+\alpha/p} - y^{n+(\alpha-1)/p}}{\tau }
  + A_{\alpha} (\sigma y^{n+\alpha/p} + (1-\sigma)y^{n+(\alpha-1)/p}) =
  \widetilde{\varphi}_{\alpha}^n,
\end{equation}
where  
\[
  \widetilde{\varphi}_{\alpha}^n = 
  (E + \sigma \tau A_{\alpha}) \varphi_{\alpha}^n ,
  \quad \alpha = 1,2,...,p .
\]
Scheme (\ref{52}) is a special variant of the standard component-wise splitting scheme \cite{Marchuk:1990:SAD,0971.65076,0209.47103}.
But unlike these schemes of summarized  approximation we constructed here the regularized schemes of full approximation.  
Regularized schemes (\ref{41}), (\ref{44}), (\ref{45}),  constructed using additive representation 
(\ref{44})  for the transition operator, are more suitable for parallel computations in compare with
regularized schemes (\ref{41}), (\ref{48}), (\ref{49}) which are based on multiplicative representation  (\ref{48}).

\section{Vector schemes of domain decomposition}
\label{s-5}

Difference schemes for unsteady problems can often be treated as appropriate iterative 
methods for the approximate solution of stationary problems. 
Great opportunities in this direction  provide the vector additive schemes  \cite{0712.65089,vabishchevich1996vector}.

Instead of the single unknown $u(t)$ we consider $p$ unknowns  $u_{\alpha}, \  \alpha = 1,2,...,p$, 
which are determined from the system  
\begin{equation}\label{53}
  \frac{d  u_{\alpha}}{d  t} + 
  \sum_{\beta = 1}^{p} A_{\beta} u_{\beta} = f(t),    
  \quad \alpha = 1,2,...,p ,
  \quad 0 < t \leq T . 
\end{equation}
The system of equations  (\ref{53}) is supplemented with the initial conditions  
\begin{equation}\label{54}
  u_{\alpha}(0) =  u^0,
  \quad \alpha = 1,2,...,p ,
\end{equation}
which follow from (\ref{2}).
Obviously, each function is a solution of problem (\ref{2}), (\ref{3}), (\ref{33}).
The approximate solution of (\ref{2}), (\ref{3}), (\ref{33})  
will be constructed on the basis of one or another difference scheme for vector problem (\ref{53}), (\ref{54}).

To solve problem (\ref{53}), (\ref{54}), we use the following two-level scheme  
\[
  \frac{y_{\alpha}^{n+1} - y_{\alpha}^{n}}{\tau }
  +  \sum_{\beta = 1}^{\alpha }A_{\beta}y_{\beta }^{n+1}
  +  \sum_{\beta = \alpha + 1}^{p} A_{\beta}y_{\beta }^{n}
  = \varphi^n,
\]
\begin{equation}\label{55}
  \quad \alpha = 1,2,...,p ,
  \quad n = 0,1, ..., N-1 .  
\end{equation}
For this difference scheme we use the initial conditions  
\begin{equation}\label{56}
  y_{\alpha}(0) =  u^0,
  \quad \alpha = 1,2,...,p .
\end{equation}
Numerical implementation of this scheme is based on
the successive inversion of operators  $E + \tau A_{\alpha}, \ \alpha = 1,2,...,p$.

\begin{theorem} 
\label{t-4} 
Vector additive difference scheme (\ref{33}), (\ref{55}), (\ref{56})
is unconditionally stable, and for the components of the difference solution the following stability 
estimate with respect to the initial data and right-hand side
\[
  \|y_{\alpha}^{n+1}\| \leq \|y_{\alpha}^{n}\| + 
  \tau  \| \varphi^{0} - A u^0 \| +  
  \tau \sum_{k=1}^{n} \tau 
  \left \| \frac{\varphi^{k} - \varphi^{k-1}}{\tau } \right \| ,
\]
\begin{equation}\label{57}
  \alpha = 1,2,...,p ,
  \quad n = 0,1, ..., N-1 ,
\end{equation}
 is valid.
\end{theorem}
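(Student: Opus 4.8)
The plan is to reduce the vector scheme to a scalar-type estimate in a specially chosen norm adapted to the triangular structure of (\ref{55}). First I would rewrite the scheme in block-operator form. Introduce the vector $Y^n = (y_1^n,\dots,y_p^n)^{\mathrm T}$ in the product space $H^p$, and write (\ref{55}) as
\[
  B \frac{Y^{n+1}-Y^n}{\tau} + \mathcal{A} Y^n = \Phi^n,
\]
where $\mathcal{A}$ is the block matrix whose $(\alpha,\beta)$ entry is $A_\beta$ for all $\alpha,\beta$ (a rank-type structure: every block-row equals $(A_1,\dots,A_p)$), $B$ is the lower-triangular block matrix with entry $E$ for $\beta\le\alpha$ and $0$ for $\beta>\alpha$ (so that $B^{-1}$ exists), and $\Phi^n$ has every block equal to $\varphi^n$. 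The key algebraic observation is that, because each block-row of $\mathcal{A}$ acts on $Y$ through the same combination $\sum_\beta A_\beta y_\beta$, one has the identity relating consecutive differences of the components: subtracting the equation for index $\alpha-1$ from that for index $\alpha$ gives $y_\alpha^{n+1}-y_\alpha^n = y_{\alpha-1}^{n+1}-y_{\alpha-1}^n - \tau A_\alpha(y_\alpha^{n+1}-y_\alpha^n)$, i.e. $(E+\tau A_\alpha)(y_\alpha^{n+1}-y_\alpha^n) = y_{\alpha-1}^{n+1}-y_{\alpha-1}^n$, which is the basis of the sequential implementation.

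Next I would pass to the ``divided difference'' variables $w_\alpha^n = (y_\alpha^n - y_\alpha^{n-1})/\tau$ (for $n\ge1$), since the right-hand side of (\ref{57}) is naturally phrased in terms of these. Differencing (\ref{55}) in $n$ shows the $w_\alpha^n$ satisfy the same homogeneous-type vector scheme with right-hand side $(\varphi^n-\varphi^{n-1})/\tau$; and at the first step, evaluating (\ref{55}) at $n=0$ with $y_\alpha^0 = u^0$ gives $w_\alpha^1 = \varphi^0 - A u^0$ (plus lower-triangular corrections that telescope), explaining the term $\|\varphi^0 - Au^0\|$. Then I would establish a one-step stability bound $\|y_\alpha^{n+1}\| \le \|y_\alpha^n\| + \tau\,\|\text{(current forcing)}\|$ for the homogeneous version, by the following device: using the relation $(E+\tau A_\alpha)(y_\alpha^{n+1}-y_\alpha^n) = (E+\tau A_{\alpha-1})(y_{\alpha-1}^{n+1}-y_{\alpha-1}^n) - \tau A_\alpha y_\alpha^{n+1} + \tau A_{\alpha-1} y_{\alpha-1}^{n}$... actually the cleanest route is to note that each component update has the form $(E+\tau A_\alpha) y_\alpha^{n+1} = (E+\tau A_\alpha) y_\alpha^n - \tau \sum_{\beta}A_\beta(\cdots) + \cdots$; take the scalar product of the $\alpha$-th equation with $y_\alpha^{n+1}$, use $(A_\alpha z,z)\ge 0$ from (\ref{33}), and sum the resulting telescoping terms, discarding the nonnegative contributions $\tau(A_\alpha y_\alpha^{n+1},y_\alpha^{n+1})$, to arrive at $\|y_\alpha^{n+1}\|^2 \le \|y_\alpha^{n+1}\|\big(\|y_\alpha^n\| + \tau\|\text{forcing}\|\big)$, hence the desired first-order recurrence.

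Finally I would iterate: summing the one-step bound for the $w$-problem over the steps $1,\dots,n$ gives $\|w_\alpha^{n+1}\| \le \|w_\alpha^1\| + \sum_{k=1}^n \tau\,\|(\varphi^k-\varphi^{k-1})/\tau\|$, and then $\|y_\alpha^{n+1}\| \le \|y_\alpha^n\| + \tau\|w_\alpha^{n+1}\|$ yields exactly (\ref{57}) after inserting $\|w_\alpha^1\| = \|\varphi^0 - A u^0\|$. The main obstacle I anticipate is handling the coupling cleanly: because the $\alpha$-th equation of (\ref{55}) contains all the $A_\beta y_\beta$ terms (not just $A_\alpha y_\alpha$), the naive energy argument mixes the components, so one must exploit the observation that the full sum $\sum_\beta A_\beta y_\beta^{(\cdot)}$ is common to all block-rows and use the triangular splitting $\sum_{\beta\le\alpha}(\cdots)^{n+1} + \sum_{\beta>\alpha}(\cdots)^n$ to re-express the coupling as a telescoping-in-$\alpha$ quantity. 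Getting that bookkeeping right — so that only nonnegative terms $\tau(A_\alpha y_\alpha^{n+1},y_\alpha^{n+1})$ are thrown away and everything else collapses to the stated bound — is the delicate step; once it is set up, the Gronwall-type iteration in $n$ is routine.
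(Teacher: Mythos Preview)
Your overall architecture matches the paper: pass to the divided differences $w_\alpha^{n}=(y_\alpha^{n}-y_\alpha^{n-1})/\tau$, bound $\|w_\alpha^{1}\|$ by $\|\varphi^{0}-Au^{0}\|$, establish a one-step recurrence for $\|w_\alpha^{n}\|$, iterate, and finish with the trivial inequality $\|y_\alpha^{n+1}\|\le\|y_\alpha^{n}\|+\tau\|w_\alpha^{n+1}\|$. You also correctly isolate the cascade identity $(E+\tau A_{\alpha})(y_{\alpha}^{n+1}-y_{\alpha}^{n})=y_{\alpha-1}^{n+1}-y_{\alpha-1}^{n}$, which is exactly the paper's equation~(\ref{58}).

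The gap is in how you propose to get the one-step recurrence $\|w_\alpha^{n+1}\|\le\|w_\alpha^{n}\|+\tau\|(\varphi^{n}-\varphi^{n-1})/\tau\|$. Your energy route (take scalar products with $y_\alpha^{n+1}$, discard $(A_\alpha y_\alpha^{n+1},y_\alpha^{n+1})\ge 0$, hope the rest telescopes) is vague and, as you suspect, does not close cleanly: the off-diagonal terms $A_\beta y_\beta$ with $\beta\ne\alpha$ have no sign and do not collapse. The paper sidesteps any energy argument. It uses the identity you already found, which in the $w$-variables reads $(E+\tau A_{\alpha+1})w_{\alpha+1}^{n+1}=w_{\alpha}^{n+1}$; since $A_{\alpha+1}\ge 0$ gives $\|(E+\tau A_{\alpha+1})^{-1}\|\le 1$, this yields the within-step cascade $\|w_{p}^{n+1}\|\le\cdots\le\|w_{1}^{n+1}\|$. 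What you are missing is a second identity of the same type, obtained by subtracting the $\alpha=p$ equation at level $n-1$ from the $\alpha=1$ equation at level $n$:
\[
  (E+\tau A_{1})\,w_{1}^{n+1} \;=\; w_{p}^{\,n} \;+\; \tau\,\frac{\varphi^{n}-\varphi^{n-1}}{\tau},
\]
which gives $\|w_{1}^{n+1}\|\le\|w_{p}^{n}\|+\tau\|(\varphi^{n}-\varphi^{n-1})/\tau\|$. Chaining this with the within-step cascade produces the desired one-step bound for every $\alpha$, with no scalar products and no bookkeeping beyond the resolvent estimate $\|(E+\tau A_\alpha)^{-1}\|\le 1$. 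The same identity at $n=0$ (using $y_\alpha^{0}=u^{0}$) gives $\|w_1^{1}\|\le\|\varphi^{0}-Au^{0}\|$, and the cascade pushes this to all $\alpha$. The rest is your iteration.
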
 

\begin{proof}
To study  vector scheme (\ref{55}), (\ref{56}), it is convenient to use the approach from the work 
\cite{samarskii1998stability}.
Subtracting the  $\alpha$-th  equation for $y_{\alpha+1}^{n}$  from the $\alpha + 1$-th  equations of system (\ref{55})
for  $y_{\alpha+1}^{n+1}$, we get
\begin{equation}\label{58}
  (E + \tau A_{\alpha + 1}) 
  \frac{y_{\alpha+1}^{n+1} - y_{\alpha+1}^{n}}{\tau } =
  \frac{y_{\alpha}^{n+1} - y_{\alpha}^{n}}{\tau },
  \quad \alpha = 1,2, ..., p-1.
\end{equation}
Similarly, considering the equations for $y_{1}^{n+1}$ and $y_{p}^{n}$, we obtain  
\begin{equation}\label{59}
  (E + \tau A_{1}) 
  \frac{y_{1}^{n+1} - y_{1}^{n}}{\tau } =
  \frac{y_{p}^{n} - y_{p}^{n-1}}{\tau } +
  \tau \frac{\varphi^{n} - \varphi^{n-1}}{\tau } .
\end{equation}

Taking into account nonnegativity of  operators $A_{\alpha}, \ \alpha = 1,2, ..., p$,
from (\ref{58}) we obtain  
\begin{equation}\label{60}
  \left \| \frac{y_{\alpha+1}^{n+1} - y_{\alpha+1}^{n}}{\tau } \right \|
  \leq 
  \left \| \frac{y_{\alpha}^{n+1} - y_{\alpha}^{n}}{\tau } \right \|,
  \quad \alpha = 1,2, ..., p-1.
\end{equation}
Similarly, from (\ref{59}) we have  
\begin{equation}\label{61}
  \left \| \frac{y_{1}^{n+1} - y_{1}^{n}}{\tau } \right \|
  \leq 
  \left \| \frac{y_{p}^{n} - y_{p}^{n-1}}{\tau } \right \|+
  \tau \left \| \frac{\varphi^{n} - \varphi^{n-1}}{\tau } \right \| .
\end{equation}
From (\ref{60}), (\ref{61}), we derive at each time level the following estimate
\[
  \left \| \frac{y_{\alpha}^{n+1} - y_{\alpha}^{n}}{\tau } \right \|
  \leq 
  \left \| \frac{y_{\alpha}^{n} - y_{\alpha}^{n-1}}{\tau } \right \|+
  \tau \left \| \frac{\varphi^{n} - \varphi^{n-1}}{\tau } \right \| ,
\]
\begin{equation}\label{62}
  \alpha = 1,2, ..., p,
  \quad n = 1,2, ..., N-1 .
\end{equation}
From  (\ref{62}) we get
\[
  \left \| \frac{y_{\alpha}^{n+1} - y_{\alpha}^{n}}{\tau } \right \|
  \leq 
  \left \| \frac{y_{\alpha}^{1} - y_{\alpha}^{0}}{\tau } \right \|+
  \sum_{k=1}^{n} \tau 
  \left \| \frac{\varphi^{k} - \varphi^{k-1}}{\tau } \right \| ,
\]
\begin{equation}\label{63}
  \alpha = 1,2, ..., p,
  \quad n = 1,2, ..., N-1 .
\end{equation}
From (\ref{55}) with  $\alpha = 1$, taking into account splitting (\ref{33})
and initial conditions (\ref{56})), we obtain  
\[
  \left \| \frac{y_{1}^{1} - y_{1}^{0}}{\tau } \right \| \leq 
  \| \varphi^{0} - A u^0 \| .  
\]
In view of (\ref{60}) we can rewrite inequality (\ref{63})  as  follows
\[
  \left \| \frac{y_{\alpha}^{n+1} - y_{\alpha}^{n}}{\tau } \right \|
  \leq 
  \| \varphi^{0} - A u^0 \| +  
  \sum_{k=1}^{n} \tau 
  \left \| \frac{\varphi^{k} - \varphi^{k-1}}{\tau } \right \| ,
\]
\begin{equation}\label{64}
  \alpha = 1,2, ..., p,
  \quad n = 1,2, ..., N-1 .
\end{equation}
Taking into account the obvious inequality  
\[
  \|y_{\alpha}^{n+1}\| \leq \|y_{\alpha}^{n}\| + 
  \tau \left \| \frac{y_{\alpha}^{n+1} - y_{\alpha}^{n}}{\tau } \right \| ,
  \quad \alpha = 1,2,...,p ,
\]
we obtain from (\ref{64}) required estimate  (\ref{57}).
\end{proof}

We emphasize that the above stability estimates(\ref{57}) are received 
for each individual component $y_{\alpha}^{n+1}, \ \alpha = 1,2,...,p$. 
Each of them or their linear combination  
\[
  y^{n+1} = \sum_{\alpha =1}^{p} c_{\alpha} y_{\alpha}^{n+1},
  \quad c_{\alpha} = \const \geq 0,
  \quad \alpha = 1,2, ..., p
\]
can be treated as an approximate solution of our problem  
(\ref{2}), (\ref{3}), (\ref{33}) at time moment $t=t^{n+1}$.

\section{Model problem}
\label{s-6}

To illustrate possibilities of the constructed here domain decomposition schemes, 
let us consider the simplest boundary value problem for the parabolic equation.  
We consider the problem in a rectangle  
\[
  \Omega = \{ \ \mathbf{x} \ | \ \mathbf{x} = (x_1, x_2), 
  \ 0 < x_{\alpha} < l_{\alpha}, \ \alpha =1,2 \}.
\]
In $\Omega $  the following boundary  value problem 
\begin{equation}\label{65}
   \frac{\partial u}{\partial t} = 
   \sum_{\alpha =1}^{2} \frac{\partial^2 u}{\partial x^2_\alpha} , 
   \quad {\bf x}\in \Omega,
   \quad 0 < t < T,
\end{equation}
\begin{equation}\label{66}
   u({\bf x},t) = 0,
   \quad {\bf x}\in \partial \Omega,
   \quad 0 < t < T,
\end{equation}
\begin{equation}\label{67}
   u({\bf x},0) = u^0({\bf x}),
   \quad {\bf x}\in \Omega
\end{equation}
is solved.  

The approximate solution is searched at the nodes of a uniform rectangular grid  in  $\Omega$:
 \[
   \bar{\omega} = \{ \mathbf{x} \ | \ \mathbf{x} = (x_1, x_2),
   \quad x_\alpha = i_\alpha h_\alpha,
   \quad i_\alpha = 0,1,...,N_\alpha,
   \quad N_\alpha h_\alpha = l_\alpha\} 
\]
and let  $\omega$ be the set of internal nodes
($\bar{\omega} = \omega \cup \partial \omega$). 
For grid functions $y(\mathbf{x}) = 0, \ \mathbf{x} \in \partial \omega$
we define the Hilbert space  $H = L_2({\omega})$  with the scalar product and norm  
\[
  (y,w) \equiv \sum_{{\bf x} \in \omega}
  y({\bf x}) w({\bf x}) h_1 h_2,
  \quad \|y\| \equiv (y,y)^{1/2} .
\]

Approximating problem (\ref{65}), (\ref{66})  in space,
we obtain the differential-difference equation 
\begin{equation}\label{68}
  \frac{d y}{d t} + A y = 0,
  \quad \mathbf{x} \in \omega,
   \quad 0 < t < T,
\end{equation}
where  
\[
  A y =
  - \frac{1}{h_1^2} (y(x_1+h_1,x_2) - 2y(x_1,x_2) - y(x_1-h_1,x_2))
\]
\begin{equation}\label{69}
  - \frac{1}{h_2^2} (y(x_1,x_2+h_2) - 2y(x_1,x_2) - y(x_1,x_2-h_2)),
  \quad \mathbf{x} \in \omega .
\end{equation}
In the space $H$ the operator  $A$  is self-adjoint and positive definite 
 \cite{0971.65076,samarskii1989numerical}:
\begin{equation}\label{70}
  A = A^* \geq (\delta_1+\delta_2) E,
  \quad \delta_{\alpha} = 
  \frac{4}{h^2_{\alpha}} \sin^2 \frac{\pi h_{\alpha}}{2 l_{\alpha}} ,
  \quad \alpha = 1,2.
\end{equation}
Taking into account (\ref{67}), we supplement equation (\ref{69}) 
with the initial condition  
\begin{equation}\label{71}
   y({\bf x},0) = u^0({\bf x}),
   \quad {\bf x}\in \omega .
\end{equation}

For simplicity, operators of domain decomposition in the investigated problem
(\ref{68})--(\ref{71})  will be constructed without the explicit definition of operators
 $G$  and  $G$  as well as space $\widetilde{H}$, 
focusing on decomposition (\ref{21}), (\ref{22}).
We set  
\[
  A_{\alpha} y =
  - \frac{1}{h_1^2} \eta_{\alpha}(x_1+0.5h_1,x_2)
  (y(x_1+h_1,x_2) - y(x_1,x_2))
\]
\[
  + \frac{1}{h_1^2} \eta_{\alpha}(x_1-0.5h_1,x_2)
  (y(x_1,x_2) - y(x_1-h_1,x_2)) 
\]
\[
  - \frac{1}{h_2^2} \eta_{\alpha}(x_1,x_2+0.5h_2)
  (y(x_1,x_2+h_2) - y(x_1,x_2))
\]
\begin{equation}\label{72}
  + \frac{1}{h_2^2} \eta_{\alpha}(x_1,x_2-0.5h_2)
  (y(x_1,x_2) - y(x_1,x_2-h_2)) ,
  \quad \alpha =1,2,...,p .
\end{equation}
In view of (\ref{21}), (\ref{22}) we have 
\begin{equation}\label{73}
  A = \sum_{\alpha =1}^{p}  A_{\alpha},
  \quad A_{\alpha} = A^*_{\alpha},
  \quad \alpha =1,2,...,p .
\end{equation}
Thus, we consider the class of additive schemes (\ref{33}).  

\begin{figure}[h]
  \begin{tikzpicture}
    \fill [black!15] (0,0) rectangle +(8.,8.);
    \draw (0,0) rectangle +(8.,8.);
	\foreach \x in {0,...,1}
      \fill[black!30] (4*\x+2,0) rectangle +(2.,8.);
	\foreach \x in {0,...,2}
      \fill[black!45] (2*\x+1.9,0) rectangle +(0.2,8.);
  \end{tikzpicture}
  \caption{Domain decomposition  }
\label{f-1}
\end{figure}
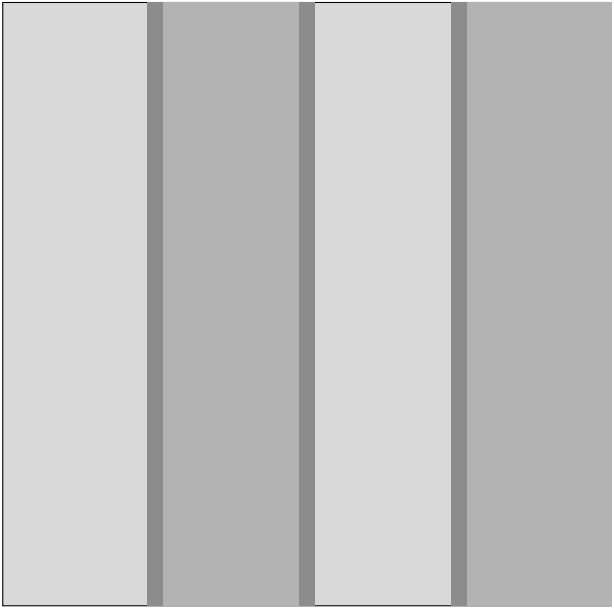

Numerical calculations  for problem  (\ref{65})--(\ref{67}) are performed in the unit square
 ($l_1 = l_2 = 1$) where the solution has the form  
\begin{equation}\label{74}
  u(\mathbf{x},t) = 
  \sin(n_1 \pi x_1) \sin(n_2 \pi x_2) 
  \exp (- \pi^2(n_1^2  + n_2^2) t)
\end{equation}
for a natural $ n_1 $ and $ n_2 $.
For this solution we set the corresponding initial conditions (\ref{67}).
Decomposition is performed with respect to one of two variables into four subdomains (see Fig.~\ref{f-1}) 
with overlapping.  
Disconnected subdomains can be considered as some single subdomain and the decomposition in Fig.~\ref{f-1} 
can be treated as the decomposition into two subdomains described via two functions:  
$\eta_{\alpha} = \eta_{\alpha}(x_1), \ \alpha =1,2$.

For problems of type  (\ref{65})--(\ref{67}) 
two cases of domain decomposition are highlighted: decomposition with and without overlapping  of subdomains.  
Methods without overlapping  of subdomains are connected with the explicit formulation 
of certain conditions at the common boundaries.  
In our case a special problem at the interfaces is not formulated, 
but for algorithms without  overlapping we can derive the corresponding exchange boundary conditions.   

For the domain decomposition methods the fundamental issue is exchange of calculation 
data between different subdomains.  
Standard explicit schemes can be used.  
In this case the domain decomposition can be associated with separate subsets of grid 
nodes:$\omega_{\alpha}, \ \alpha =1,2$, where $\omega = \omega_1 \cup  \omega_2$.
In the case of (\ref{65})--(\ref{67}) (the seven point stencil in space), 
the transition to the new  time level  via the explicit scheme 
for finding the approximate solution on grid $\omega_{\alpha}, \ \alpha =1,2$
is performed using the solution values at nodes adjacent to the interface.
We need to transfer the data of size $\sim \partial \omega_{\alpha},\ \alpha =1,2$.
For the approximate solution of problem  (\ref{68})--(\ref{71})
we can consider two possibilities with minimum overlapping of subdomains.  
The first employs the domain decomposition with interfaces at integer nodes ---
the boundary nodes belong to several subdomains (two in our case of decomposition with respect to one variable).   
The second possibility is realized when the boundary of subdomains passes through the half-integer nodes of the corresponding variable.  

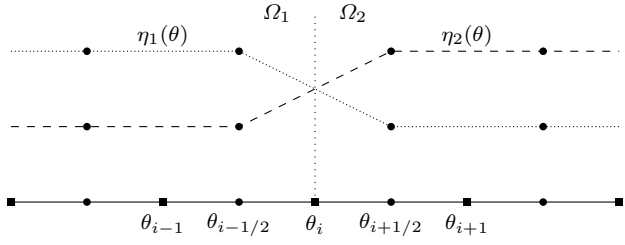
\begin{figure}[h]
  \begin{tikzpicture}
    \draw (-1,0) -- (7,0);
  	\foreach \x in {0,...,3}
	  \fill [black] (2*\x,0) circle (0.05);
  	\foreach \x in {0,...,3}
	  \fill [black] (2*\x,2) circle (0.05);
  	\foreach \x in {0,...,3}
	  \fill [black] (2*\x,1) circle (0.05);
  	\foreach \x in {0,...,4}
	  \fill [black] (2*\x-1-0.05,-0.05) rectangle +(0.1,0.1);
    \draw[densely dotted] (-1,2) -- (2,2) -- (4,1) -- (7,1);
    \draw[dashed] (-1,1) -- (2,1) -- (4,2) -- (7,2);
    \draw[dotted] (3,0) -- (3,2.5);
	\draw(3,-0.3) node {$\theta_i$};
	\draw(1,-0.3) node {$\theta_{i-1}$};
	\draw(5,-0.3) node {$\theta_{i+1}$};
	\draw(2,-0.3) node {$\theta_{i-1/2}$};
	\draw(4,-0.3) node {$\theta_{i+1/2}$};
	\draw(2.5,2.5) node {$\Omega_1$};
	\draw(3.5,2.5) node {$\Omega_2$};
	\draw(1,2.2) node {$\eta_1(\theta)$};
	\draw(5,2.2) node {$\eta_2(\theta)$};
  \end{tikzpicture}
  \caption{Decomposition through integer nodes}
\label{f-2}
\end{figure}

The variant of domain decomposition with boundaries through integer nodes is shown in Fig.~\ref{f-2}.
Assume that the decomposition is carried out in spatial variable $x_1$, i.e.  $\theta = x_1$. 
The boundary of subdomains here passes through the node $\theta = \theta_i$. 
Thus, for this decomposition operators (\ref{72}) take the form  
\[
  A_1 y =
  \frac{1}{h_1^2} (y(x_1,x_2) - y(x_1-h_1,x_2)) 
\]
\[
  - \frac{1}{2h_2^2} (y(x_1,x_2+h_2) - 2y(x_1,x_2) - y(x_1,x_2-h_2)),
\]
\[
  A_{2} y =
  - \frac{1}{h_1^2} (y(x_1+h_1,x_2) - y(x_1,x_2))
\]
\[
  - \frac{1}{2h_2^2} (y(x_1,x_2+h_2) - 2y(x_1,x_2) - y(x_1,x_2-h_2)),
  \quad x_1 = \theta_i .
\]
This decomposition can be associated with using the Neumann boundary conditions as the exchange boundary conditions.  
The relationship between the individual subdomains is minimal and requires exchange of data at $\theta = \theta_i$. 
This case can be identified by the operators of decomposition (\ref{32})
as follows: 
\begin{equation}\label{75}
  R(\widetilde{\chi}_{\alpha}) = [0,1],
  \quad \alpha = 1,2,...,p .
\end{equation}
The values of $\eta_{\alpha}(x_1\pm 0.5h_1,x_2)$,
$\eta_{\alpha}(x_1,x_2\pm 0.5h_1), \ \alpha = 1,2$
for (\ref{72}), (\ref{74}) is equal to 0 or 1.  

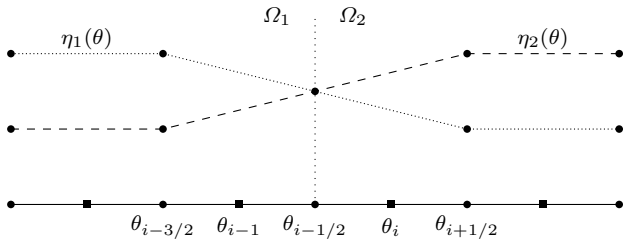
\begin{figure}[h]
  \begin{tikzpicture}
    \draw (-1,0) -- (7,0);
  	\foreach \x in {0,...,3}
	  \fill [black] (2*\x-0.05,-0.05) rectangle +(0.1,0.1);
  	\foreach \x in {0,...,4}
	  \fill [black] (2*\x-1,0) circle (0.05);
  	\foreach \x in {0,...,1}
	  \fill [black] (2*\x-1,2) circle (0.05);
  	\foreach \x in {0,...,1}
	  \fill [black] (2*\x-1,1) circle (0.05);
  	\foreach \x in {0,...,1}
	  \fill [black] (5+2*\x,2) circle (0.05);
  	\foreach \x in {0,...,1}
	  \fill [black] (5+2*\x,1) circle (0.05);
	\fill [black] (3,1.5) circle (0.05);
    \draw[densely dotted] (-1,2) -- (1,2) -- (5,1) -- (7,1);
    \draw[dashed] (-1,1) -- (1,1) -- (5,2) -- (7,2);
    \draw[dotted] (3,0) -- (3,2.5);
	\draw(3,-0.3) node {$\theta_{i-1/2}$};
	\draw(1,-0.3) node {$\theta_{i-3/2}$};
	\draw(5,-0.3) node {$\theta_{i+1/2}$};
	\draw(2,-0.3) node {$\theta_{i-1}$};
	\draw(4,-0.3) node {$\theta_{i}$};
	\draw(2.5,2.5) node {$\Omega_1$};
	\draw(3.5,2.5) node {$\Omega_2$};
	\draw(0,2.2) node {$\eta_1(\theta)$};
	\draw(6,2.2) node {$\eta_2(\theta)$};
  \end{tikzpicture}
  \caption{Decomposition through half-integer nodes}
\label{f-3}
\end{figure}

The second possibility, which is associated with decomposition through half-integer nodes, is depicted in Fig.~\ref{f-3}. 
In this case instead of (\ref{75}) we have  
\begin{equation}\label{76}
  R(\widetilde{\chi}_{\alpha}) = [0, 1/2, 1],
  \quad \alpha = 1,2,...,p .
\end{equation}
At node $\theta = \theta_i$ we use the difference approximation with the flux reduced by half.  
For the decomposition in the variable $x_1$  the operators of decomposition  (\ref{72}) seem like this  
\[
  A_1 y =
  \frac{1}{2 h_1^2} (y(x_1,x_2) - y(x_1-h_1,x_2)) 
\]
\[
  - \frac{1}{4h_2^2} (y(x_1,x_2+h_2) - 2y(x_1,x_2) - y(x_1,x_2-h_2)),
\]
\[
  A_2 y =
  - \frac{1}{h_1^2} (y(x_1+h_1,x_2) - y(x_1,x_2))
  + \frac{1}{2 h_1^2} (y(x_1,x_2) - y(x_1-h_1,x_2)) 
\]
\[
  - \frac{3}{4h_2^2} (y(x_1,x_2+h_2) - 2y(x_1,x_2) - y(x_1,x_2-h_2)),
  \quad x_1 = \theta_i .
\]
For  calculations in the domain  $\Omega_1$ (see Fig.~\ref{f-3})) we employ adjacent to the interface
data from the domain $\Omega_2$ --- at node $ \ theta = \ theta_i $.  
Thus, for this domain decomposition exchanges are minimal and coincide 
with the exchanges in the explicit scheme.  

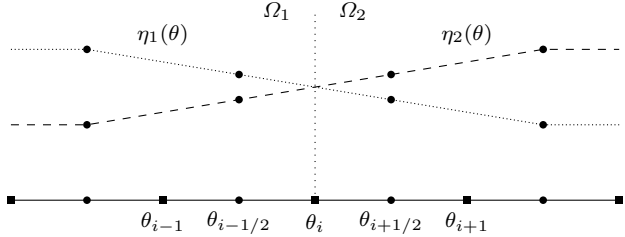
\begin{figure}[h]
  \begin{tikzpicture}
    \draw (-1,0) -- (7,0);
  	\foreach \x in {0,...,3}
	  \fill [black] (2*\x,0) circle (0.05);
  	\foreach \x in {0,...,1}
	  \fill [black] (6*\x,2) circle (0.05);
  	\foreach \x in {0,...,1}
	  \fill [black] (6*\x,1) circle (0.05);
  	\foreach \x in {0,...,1}
	  \fill [black] (2*\x+2,1+1./3) circle (0.05);
  	\foreach \x in {0,...,1}
	  \fill [black] (2*\x+2,1+2./3) circle (0.05);
  	\foreach \x in {0,...,4}
	  \fill [black] (2*\x-1-0.05,-0.05) rectangle +(0.1,0.1);
    \draw[densely dotted] (-1,2) -- (0,2) -- (2,1+2./3) -- (4,1+1./3) -- (6,1) -- (7,1);
    \draw[dashed] (-1,1) -- (0,1) -- (2,1+1./3) -- (4,1+2./3) -- (6,2) -- (7,2);
    \draw[dotted] (3,0) -- (3,2.5);
	\draw(3,-0.3) node {$\theta_i$};
	\draw(1,-0.3) node {$\theta_{i-1}$};
	\draw(5,-0.3) node {$\theta_{i+1}$};
	\draw(2,-0.3) node {$\theta_{i-1/2}$};
	\draw(4,-0.3) node {$\theta_{i+1/2}$};
	\draw(2.5,2.5) node {$\Omega_1$};
	\draw(3.5,2.5) node {$\Omega_2$};
	\draw(1,2.2) node {$\eta_1(\theta)$};
	\draw(5,2.2) node {$\eta_2(\theta)$};
  \end{tikzpicture}
  \caption{Decomposition through integer nodes with the width of overlapping  $3h$}
\label{f-4}
\end{figure}

The considered variants of decomposition  (\ref{75}), (\ref{76}) 
correspond to the minimum overlapping of subdomains.  
At the discrete level the width of overlapping is governed by the mesh size ($h$  and $2h$, respectively). 
Similar variants can be constructed for a higher overlapping of subdomains.  
For the decomposition presented in Fig.~\ref{f-4} we have  
\begin{equation}\label{77}
  R(\widetilde{\chi}_{\alpha}) = [0, 1/3, 2/3, 1],
  \quad \alpha = 1,2,...,p .
\end{equation}
Obviously, in this case we have a grater volume of data exchange, but at the same time the transition from one domain 
to another is much smoother.  
The latter allows us to expect a higher accuracy of the approximate solution.  

Consider now the results of approximate solving problem (\ref{65})--(\ref{67}), 
which has the exact solution  (\ref{74}). Let $n_1 = 2, \ n_2 = 1$, $ T = 0.01$
and the grid is square $N_1 = N_2$.
Calculations have been performed using regularized schemes with the additive (scheme (\ref{7}), (\ref{41}), (\ref{45}), (\ref{45}))   
and multiplicative (scheme  (\ref{7}), (\ref{41}), (\ref{48}), (\ref{49}))  perturbation of the  transition operator 
at $\sigma = 1$, as well as vector additive scheme (\ref{33}), (\ref{55}), (\ref{56}).
The results are compared with the difference solution obtained via implicit scheme 
(\ref{1}), (\ref{6}), (\ref{7})  at $\sigma = 1$.
The error of the approximate solution was estimated through value $\varepsilon(t^n) = \| y^n(\mathbf{x}) - u(\mathbf{x},t^n)\|$ 
at a particular time step.  

\begin{figure}[h]
  \includegraphics[width=0.8\textwidth,angle=-0]{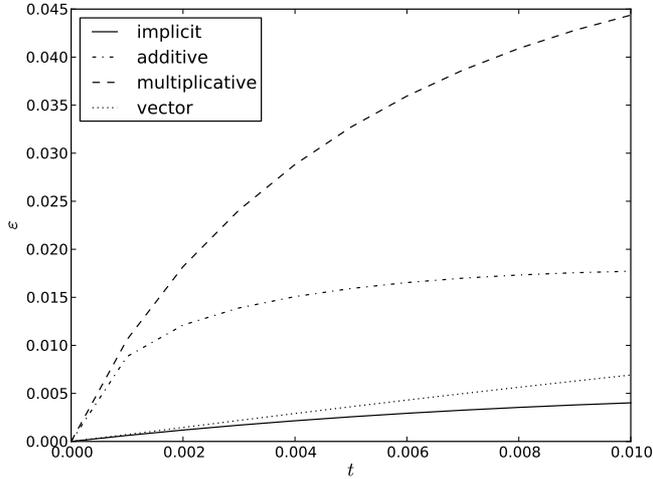}
  \caption{Error at $N_1 = N_2 = 32$ and $N = 10$}
\label{f-5}
\end{figure}

Considering decomposition  (\ref{75}) (the width of overlapping is $ h $) with 
the space grid $N_1 = N_2 = 32$  and time grid  $N = 10$ ($\tau = 0.001$), we can compare
the error norms of the difference solution obtained using different schemes (see Fig.~\ref{f-5}). 
Figure~\ref{f-6}--\ref{f-8} shows the local error at the final time moment.  
The error is localized in the area of overlapping and it is much lower for the vector  scheme of decomposition
in compare with  the additive and multiplicative variants of regularized additive schemes.  

\begin{figure}[h]
  \includegraphics[width=0.8\textwidth,angle=-0]{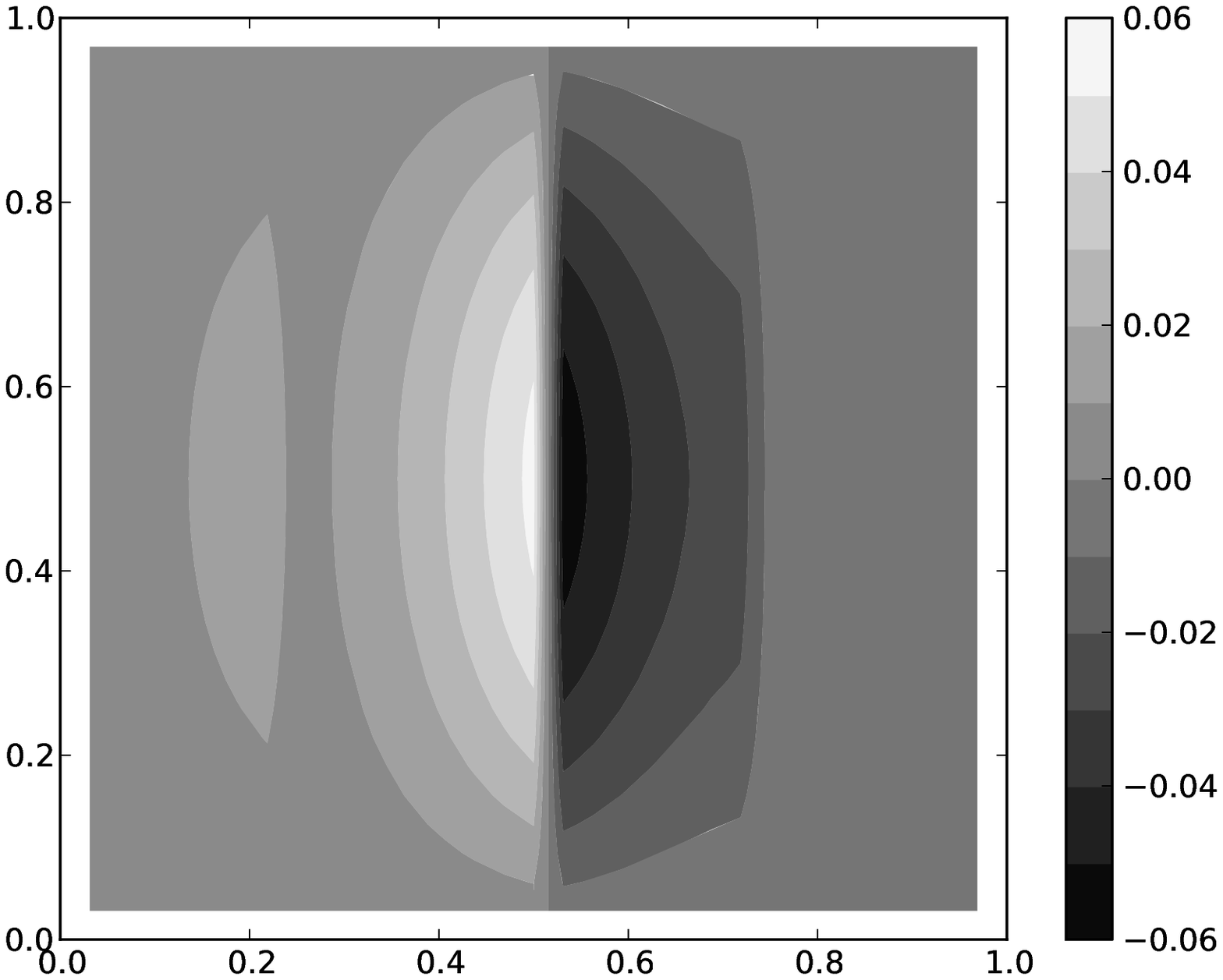}
  \caption{Error of scheme   (\ref{7}), (\ref{41}), (\ref{48}), (\ref{49})}
\label{f-6}
\end{figure}

\begin{figure}[h]
  \includegraphics[width=0.8\textwidth,angle=-0]{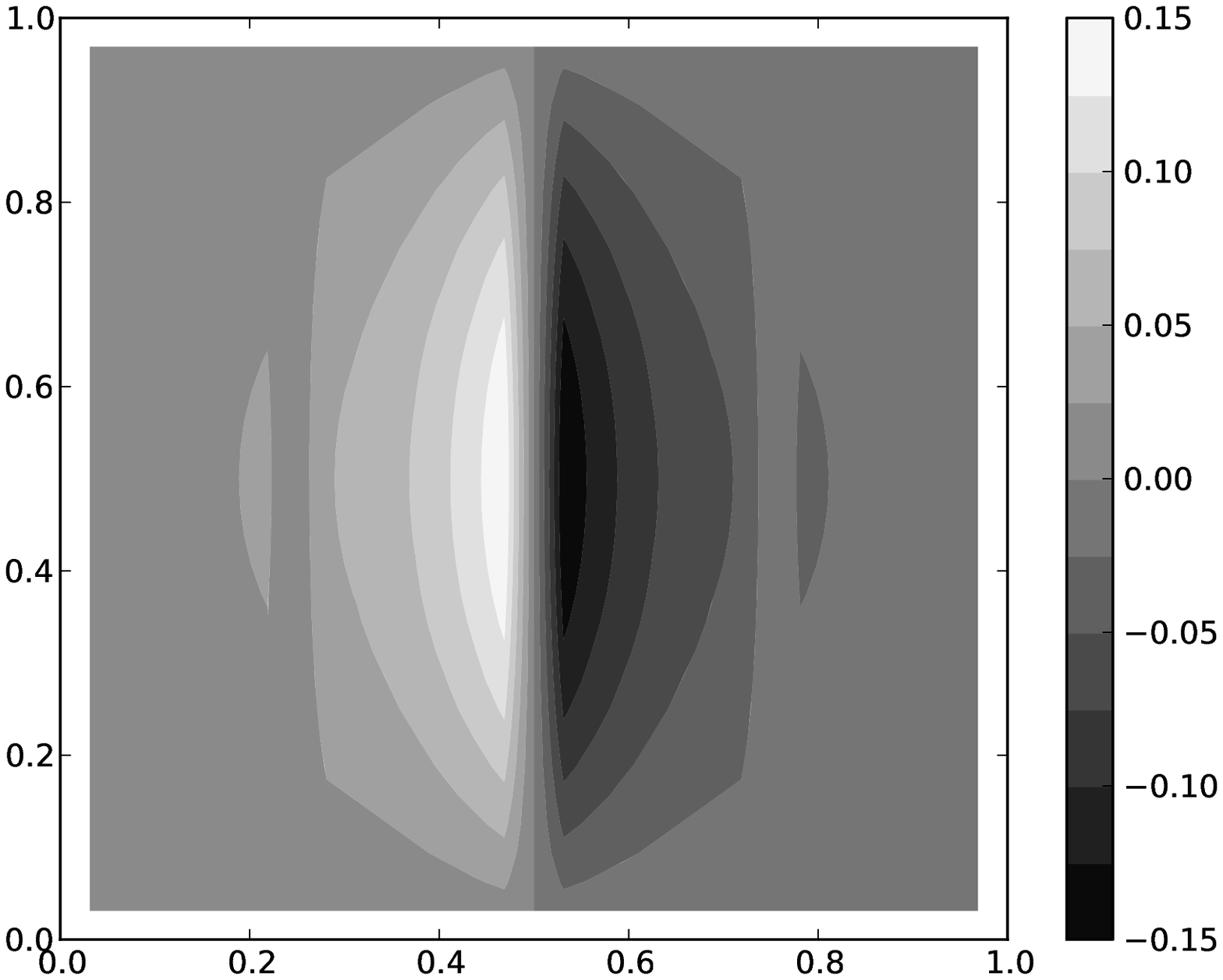}
  \caption{Error of scheme  (\ref{7}), (\ref{41}), (\ref{45}), (\ref{45})}
\label{f-7}
\end{figure}

\begin{figure}[h]
  \includegraphics[width=0.8\textwidth,angle=-0]{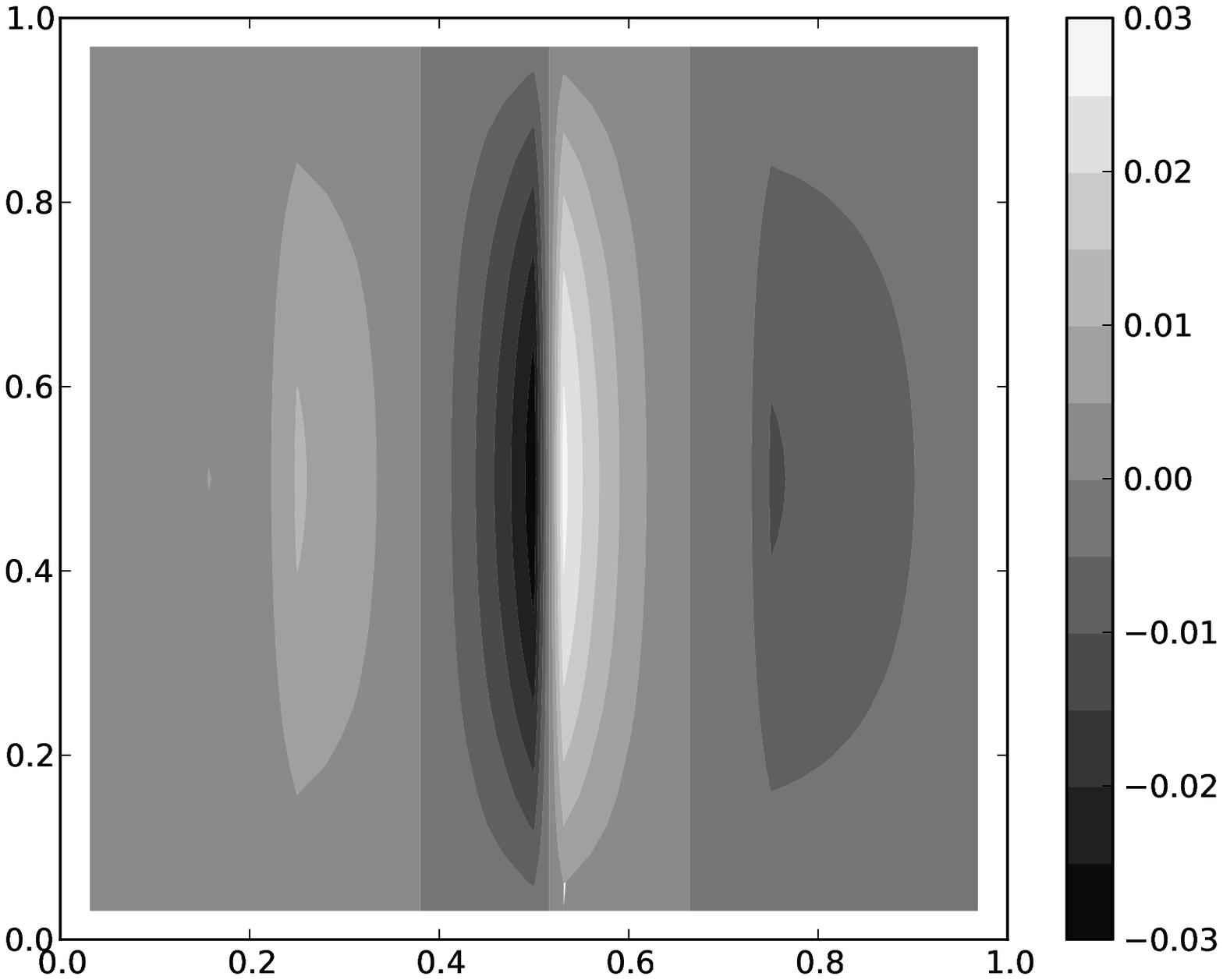}
  \caption{Error of scheme  (\ref{33}), (\ref{55}), (\ref{56})}
\label{f-8}
\end{figure}

\begin{figure}[h]
  \includegraphics[width=0.8\textwidth,angle=-0]{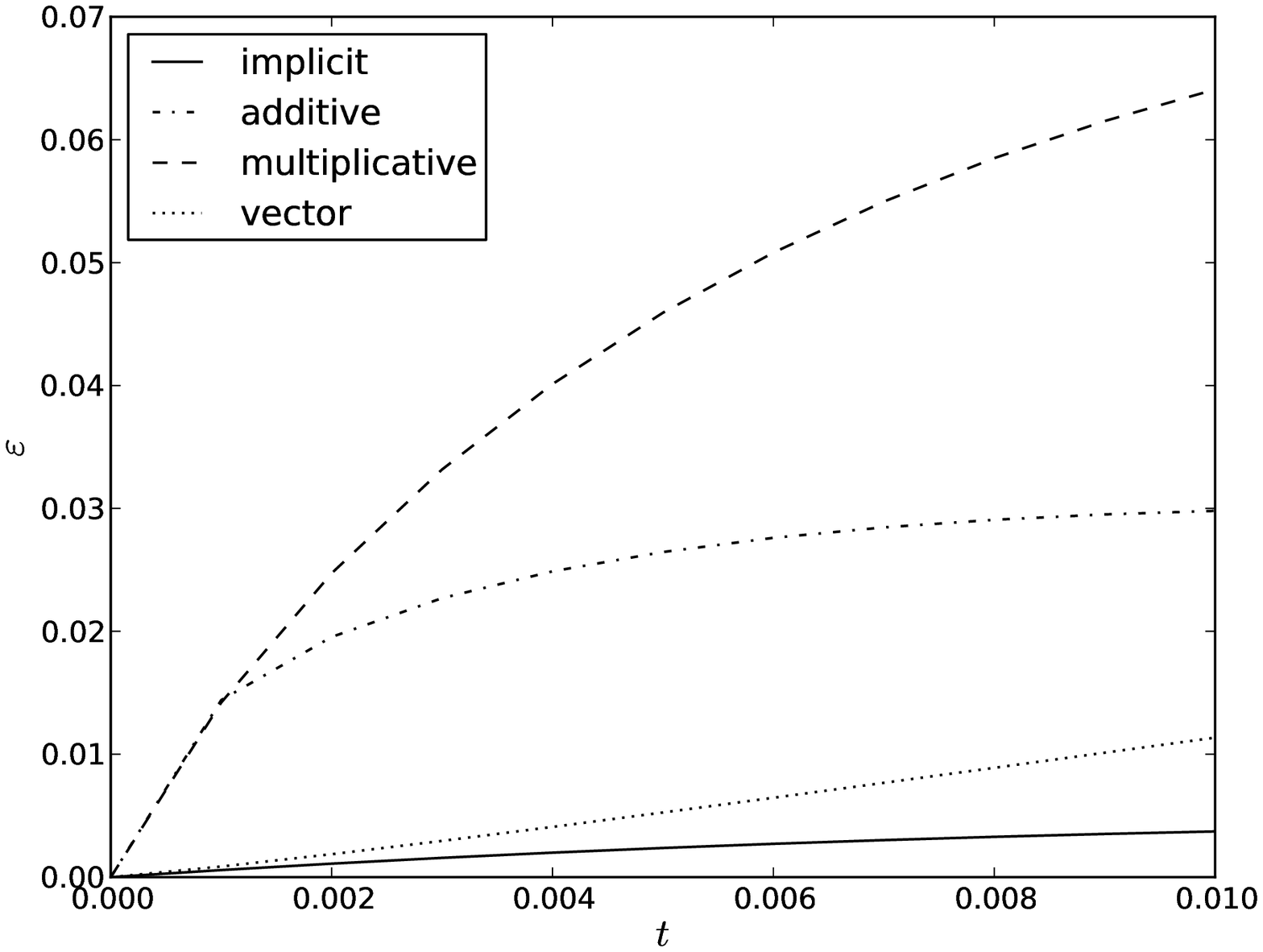}
  \caption{Error at $N_1 = N_2 = 64$ and $N = 10$}
\label{f-9}
\end{figure}

In contrast to the implicit scheme, the error of the approximate solution  grows with increasing the space grid
for domain decomposition schemes (Fig.~\ref{f-9}). 
In this case the width of overlapping is reduced by half.  

\begin{figure}[h]
  \includegraphics[width=0.8\textwidth,angle=-0]{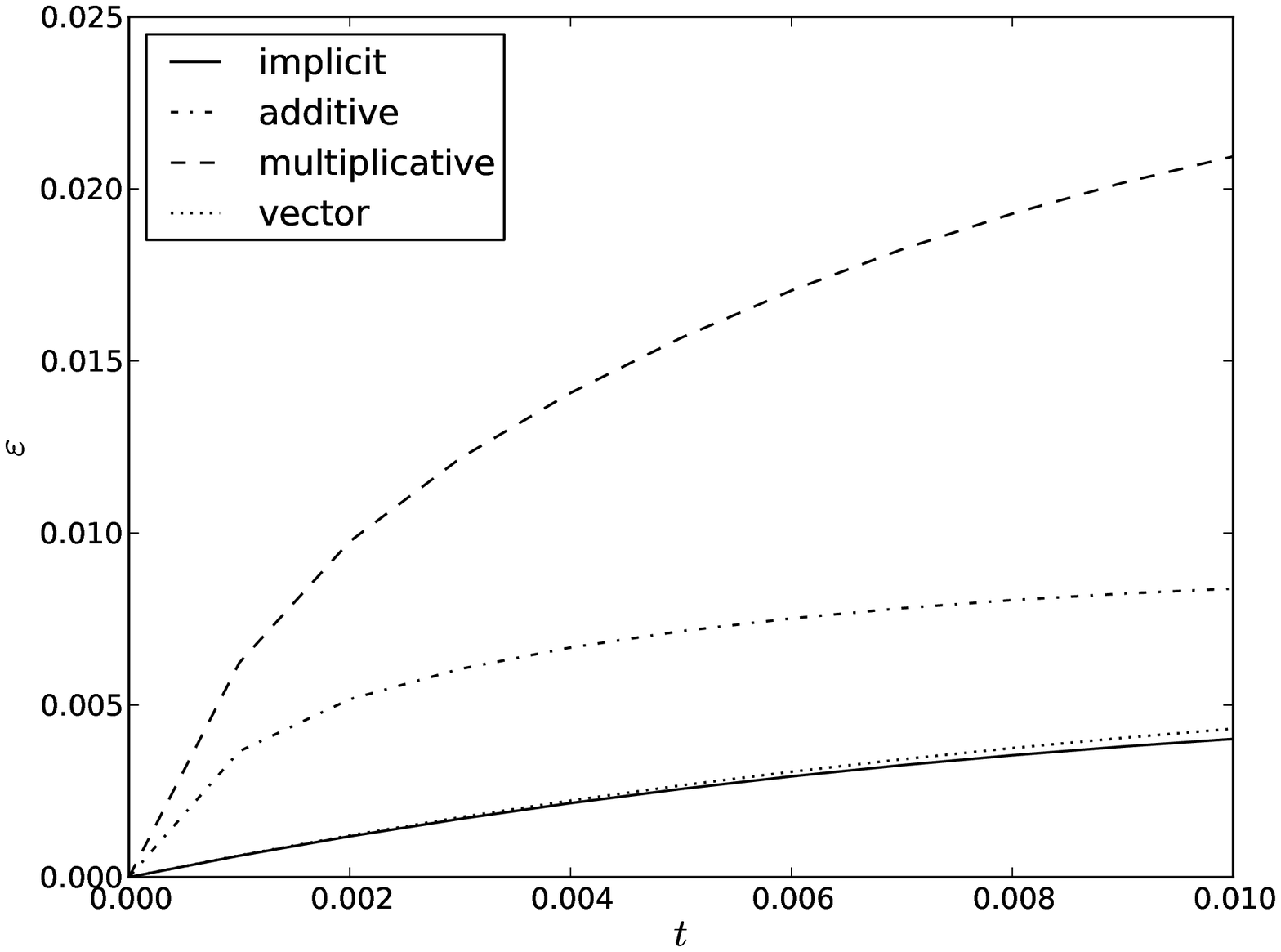}
  \caption{The error for $N_1 = N_2 = 32$, $N = 10$ and decomposition    $R = [0, 1/3, 2/3, 1]$}
\label{f-10}
\end{figure}

The dependence of results on the width of overlapping is shown in Fig.~\ref{f-10}.
It is easy to see that decomposition (\ref{77}) demonstrates the approximate solution
of essentially higher accuracy in compare with decomposition(\ref{75}) 
(compare  Fig.~\ref{f-5} with Fig.~\ref{f-10}). 

\section{Conclusions  }
\label{s-7}

\begin{enumerate}
  \item In this paper we have constructed the operators of domain decomposition for solving evolutionary problems.
Splitting of the general not self-adjoint nonnegative finite-dimensional  operator is performed 
separately for its self-adjoint and skew-symmetric parts.  
This preserves the property of nonnegativity for operator terms associated with individual subdomains.   

 \item There are constructed unconditionally stable regularized additive schemes for the Cauchy problem 
for evolutionary equations  of first order based on splitting of the problem operator into the sum 
of not self-adjoint nonnegative operators.
Regularization is based on the principle of regularization for operator-difference schemes with 
perturbation of the transition operator for the explicit scheme.  
Both additive and multiplicative splittings are considered. It was highlighted the relationship of such 
regularized schemes with the additive schemes of summarized  approximation: 
additively-averaged schemes as well as standard component-wise  splitting ones.  

 \item Vector additive schemes of full approximation are  selected among the splitting schemes for evolutionary equations.  
They are based on the transition to a system of similar problems
with a special component-wise organization for searching the 
approximate solution at the new time level.  

  \item The numerical solution of the boundary value 
problem for the parabolic equation in a rectangle was conducted.    
The calculations allow to compare various schemes of domain decomposition and to show
the accuracy dependence of the approximate solution on the width of overlapping.  
The vector additive scheme of domain decomposition demonstrates the best results in terms of accuracy.    
\end{enumerate}

% \bibliographystyle{spmpsci}      % mathematics and physical sciences
% \bibliography{DDM-NM}   % name your BibTeX data base

\begin{thebibliography}{10}
\providecommand{\url}[1]{{#1}}
\providecommand{\urlprefix}{URL }
\expandafter\ifx\csname urlstyle\endcsname\relax
  \providecommand{\doi}[1]{DOI~\discretionary{}{}{}#1}\else
  \providecommand{\doi}{DOI~\discretionary{}{}{}\begingroup
  \urlstyle{rm}\Url}\fi

\bibitem{0712.65089}
Abrashin, V.: {A variant of the method of variable directions for the solution
  of multi- dimensional problems of mathematical-physics. I.}
\newblock Differ. Equations \textbf{26}(2), 243--250 (1990)

\bibitem{abrashin1998numerical}
Abrashin, V., Vabishchevich, P.: {Vector Additive Schemes for Second-Order
  Evolution Equations}.
\newblock Differential Equations \textbf{34}(12), 1673--1681 (1998)

\bibitem{Cai:1991:ASA}
Cai, X.C.: Additive {S}chwarz algorithms for parabolic convection-diffusion
  equations.
\newblock Numer. Math. \textbf{60}(1), 41--61 (1991)

\bibitem{Cai:1994:MSM}
Cai, X.C.: Multiplicative {S}chwarz methods for parabolic problems.
\newblock SIAM J. Sci Comput. \textbf{15}(3), 587--603 (1994)

\bibitem{Dryja:1991:SMP}
Dryja, M.: Substructuring methods for parabolic problems.
\newblock In: R.~Glowinski, Y.A. Kuznetsov, G.A. Meurant, J.~P{\'e}riaux,
  O.~Widlund (eds.) Fourth International Symposium on Domain Decomposition
  Methods for Partial Differential Equations. SIAM, Philadelphia, PA (1991)

\bibitem{0825.65066}
Kuznetsov, Y.: {New algorithms for approximate realization of implicit
  difference schemes}.
\newblock Sov. J. Numer. Anal. Math. Model. \textbf{3}(2), 99--114 (1988)

\bibitem{0766.65089}
Kuznetsov, Y.: {Overlapping domain decomposition methods for FE-problems with
  elliptic singular perturbed operators}.
\newblock {Fourth international symposium on domain decomposition methods for
  partial differential equations, Proc. Symp., Moscow/Russ. 1990, 223-241
  (1991)} (1991)

\bibitem{Laevsky}
Laevsky, Y.: Domain decomposition methods for the solution of two-dimensional
  parabolic equations.
\newblock In: Variational-difference methods in problems of numerical analysis,
  2, pp. 112--128. Comp. Cent. Sib. Branch, USSR Acad. Sci., Novosibirsk
  (1987).
\newblock In Russian

\bibitem{1152.15001}
Lax, P.D.: {Linear algebra and its applications. 2nd edition.}
\newblock {Pure and Applied Mathematics. A Wiley-Interscience Series of Texts,
  Monographs \&amp; Tracts. New York, NY: Wiley. xvi, 376~p. } (2007)

\bibitem{Marchuk:1990:SAD}
Marchuk, G.: Splitting and alternating direction methods.
\newblock In: P.G. Ciarlet, J.L. Lions (eds.) Handbook of Numerical Analysis,
  Vol. I, pp. 197--462. North-Holland (1990)

\bibitem{pre05281749}
Mathew, T.: {Domain decomposition methods for the numerical solution of partial
  differential equations}.
\newblock {Lecture Notes in Computational Science and Engineering 61. Berlin:
  Springer. xiii, 764~p.} (2008)

\bibitem{0931.65118}
Quarteroni, A., Valli, A.: {Domain decomposition methods for partial
  differential equations}.
\newblock {Numerical Mathematics and Scientific Computation. Oxford: Clarendon
  Press. xv, 360 p.} (1999)

\bibitem{0971.65076}
Samarskii, A.: {The theory of difference schemes}.
\newblock {Pure and Applied Mathematics, Marcel Dekker. 240. New York, NY:
  Marcel Dekker. 786 p.} (2001)

\bibitem{1018.65103}
Samarskii, A., Matus, P., Vabishchevich, P.: {Difference schemes with operator
  factors}.
\newblock {Mathematics and its Applications (Dordrecht). 546. Dordrecht: Kluwer
  Academic Publishers. x, 384 p.} (2002)

\bibitem{samarskii1989numerical}
Samarskii, A., Nikolaev, E.: {Numerical methods for grid equations}.
\newblock Birkh{\"a}user (1989)

\bibitem{0863.65056}
Samarskii, A., Vabishchevich, P.: {Vector additive schemes of domain
  decomposition for parabolic problems}.
\newblock Differ. Equations \textbf{31}(9), 1522--1528 (1995)

\bibitem{0928.65102}
Samarskii, A., Vabishchevich, P.: {Factorized finite-difference schemes for the
  domain decomposition in convection-diffusion problems}.
\newblock Differ. Equations \textbf{33}(7), 972--979 (1997)

\bibitem{samarskii1998regularized}
Samarskii, A., Vabishchevich, P.: {Regularized additive full approximation
  schemes}.
\newblock Doklady. Mathematics \textbf{57}(1), 83--86 (1998)

\bibitem{0963.65091}
Samarskii, A., Vabishchevich, P.: {Additive schemes for problems of
  mathematical physics (Additivnye skhemy dlya zadach matematicheskoj fiziki)}.
\newblock Moscow: Nauka. 320 p. (1999).
\newblock In Russian

\bibitem{vab_255}
Samarskii, A., Vabishchevich, P.: Domain decomposition methods for parabolic
  problems.
\newblock In: C.H. Lai, P.~Bjorstad, M.~Gross, O.~Widlund (eds.) Eleventh
  International Conference on Domain Decomposition Methods, pp. 341--347.
  DDM.org (1999)

\bibitem{SamVabConvection}
Samarskii, A., Vabishchevich, P.: {Numerical methods for solution of
  convection-diffusion problems (Chislennye metody resheniya zadach
  konvekcii-diffuzii)}.
\newblock Moscow: URSS. 247 p. (1999).
\newblock In Russian

\bibitem{samarskii1998stability}
Samarskii, A., Vabishchevich, P., Matus, P.: {Stability of Vector Additive
  Schemes}.
\newblock Doklady. Mathematics \textbf{58}(1), 133--135 (1998)

\bibitem{samarskii1992regularized}
Samarskii, A.A., Vabishchevich, P.N.: Regularized difference schemes for
  evolutionary second order equations.
\newblock Math. Models and Methods in Applied Sciences \textbf{2}(3), 295--315
  (1992)

\bibitem{0857.65126}
Smith, B.: {Domain decomposition. Parallel multilevel methods for elliptic
  partial differential equations}.
\newblock {Cambridge: Cambridge University Press. xii, 224 p.} (1996)

\bibitem{1069.65138}
Toselli, A., Widlund, O.: {Domain decomposition methods -- algorithms and
  theory}.
\newblock {Springer Series in Computational Mathematics 34. Berlin: Springer.
  xv, 450~p.} (2005)

\bibitem{0719.65072}
Vabishchevich, P.: {Difference schemes with domain decomposition for solving
  non-stationary problems}.
\newblock U.S.S.R. Comput. Math. Math. Phys. \textbf{29}(6), 155--160 (1989)

\bibitem{0723.65076}
Vabishchevich, P.: {Regional-additive difference schemes for nonstationary
  problems of mathematical physics}.
\newblock Mosc. Univ. Comput. Math. Cybern. (3), 69--72 (1989)

\bibitem{vab_138}
Vabishchevich, P.: Parallel domain decomposition algorithms for time-dependent
  problems of mathematical physics.
\newblock In: Advances in Numerical Methods and Applications, pp. 293--299.
  World Schientific (1994)

\bibitem{0838.65086}
Vabishchevich, P.: {Regionally additive difference schemes with a stabilizing
  correction for parabolic problems.}
\newblock Comput. Math. Math. Phys. \textbf{34}(12), 1573--1581 (1994)

\bibitem{0888.65097}
Vabishchevich, P.: {Finite-difference domain decomposition schemes for
  nonstationary convection-diffusion problems}.
\newblock Differ. Equations \textbf{32}(7), 929--933 (1996)

\bibitem{vabishchevich1996vector}
Vabishchevich, P.: {Vector additive difference schemes for first-order
  evolutionary equations}.
\newblock Computational mathematics and mathematical physics \textbf{36}(3),
  317--322 (1996)

\bibitem{1156.65084}
Vabishchevich, P.: {Domain decomposition methods with overlapping subdomains
  for the time-dependent problems of mathematical physics.}
\newblock Comput. Methods Appl. Math. \textbf{8}(4), 393--405 (2008)

\bibitem{0986.65510}
Vabishchevich, P., Verakhovskij, V.: {Difference schemes for component-wise
  splitting-decomposition of a domain}.
\newblock Mosc. Univ. Comput. Math. Cybern. \textbf{1994}(3), 7--11 (1994)

\bibitem{0209.47103}
Yanenko, N.: {The method of fractional steps. The solution of problems of
  mathematical physics in several variables}.
\newblock {Berlin-Heidelberg-New York: Springer Verlag, VIII, 160 p. with 15
  fig. } (1971)

\end{thebibliography}
% 

\end{document}